\documentclass{mcom-l}
\usepackage{amsmath}
\usepackage{amssymb}
\usepackage{amsrefs}
\usepackage{hyperref}

\newcommand{\arxivlink}[1]{\href{https://arxiv.org/abs/#1}{arXiv:#1}}

\BibSpec{arxiv}{
	+{}  {\PrintAuthors}                {author}
	+{,} { \textit}                     {title}
	+{.} { }                            {part}
	+{:} { \textit}                     {subtitle}
	+{.} { }                            {contribution}
	+{.} { }                            {partial}
	+{}  {\PrintDatePV}                 {date}
	+{,} {\arxivlink}                   {eprint}
	+{,} {\PrintDOI}                    {doi}
	+{.} { }                            {note}
}

\newtheorem{theorem}{Theorem}[section]
\newtheorem{lemma}[theorem]{Lemma}
\newtheorem{corollary}[theorem]{Corollary}
\theoremstyle{definition}

\newtheorem{proposition}[theorem]{Proposition}
\theoremstyle{remark}
\newtheorem{remark}[theorem]{Remark}

\numberwithin{equation}{section}
\usepackage{graphicx}
\usepackage[ruled,vlined]{algorithm2e}
\usepackage{subcaption}
\newcommand{\inner}[1]{\left\langle#1\right\rangle}
\newcommand{\norm}[1]{\left\|#1\right\|}
\newcommand{\E}{\mathcal{E}}
\newcommand{\G}{\mathcal{G}}
\newcommand{\zero}{\mathrm{zero}}
\begin{document}

\title[Adaptive Symplectic Proximal Point Algorithm]{An Adaptive Symplectic Proximal Point Algorithm: Faster Convergence and Improved Numerical Performance}



\author{Yi Zhang}
\address{School of Mathematics, Yunnan Normal University, 650092 Kunming, Yunnan, China.
\newline	
\indent Yunnan Key Laboratory of Modern Analytical Mathematics and Applications, Kunming, 650500, People’s Republic of China.}
\email{zhangyi2020@lsec.cc.ac.cn}
\thanks{}


\subjclass[2010]{47J20, 47H05, 65K15, 65Y20, 90C25, 90C52}

\date{}

\dedicatory{}

\begin{abstract}
	The proximal point algorithm (PPA) stands as a fundamental approach for solving monotone inclusion problems. Notably, several key convex optimization algorithms have been proven to be specific instances of PPA. Given the importance of the PPA, there has been growing interest in developing its accelerated variants. However, for some specific cases, the PPA converges faster than the accelerated PPAs.  In this paper, we mainly study an adaptive version of symplectic accelerated PPA, called adaptive symplectic proximal point algorithm (ASPPA). We first prove that the convergence rate of ASPPA with respect to the square norm term is $O(1/k^2)$, which is the same as the convergence rate of some accelerated PPAs. Also, we show that ASPPA exists exponential convergence rate when solving strongly monotone inclusion problem. Our numerical experiments indicate that the performance of ASPPA is very good.
\end{abstract}

\maketitle
\section{Introduction}

In this paper, we consider the following monotone inclusion problem:
\begin{equation}
	\text{find } x^*\in\mathcal{H} \text{ such that } 0\in A(x^*),
	\label{eq:main}
\end{equation}
where $\mathcal{H}$ is a real Hilbert space equipped with inner product $\inner{\cdot,\cdot}$ and the induced norm $\norm{\cdot}=\sqrt{\inner{\cdot,\cdot}}$, and $A:\mathcal{H}\to 2^{\mathcal{H}}$ is a set-valued operator. We assume that $A$ is maximally monotone. Recall that a set-valued operator $A$ is monotone if
\[
\inner{u-v,x-y}\geq0,\quad\forall x,y\in\mathcal{H},\ u\in A(x),\ v\in A(y).
\]
A monotone operator $A$ is said to be maximal if its graph, defined as $\mathrm{graph}(A):=\{(x,u)\mid u\in A(x)\}$, is not a proper subset of the graph of any other monotone operator. We also assume that $\mathrm{zero}(A):=\{x\in\mathcal{H}\mid 0\in A(x)\}$ is non-empty.

The motivation for studying \eqref{eq:main} comes from its strong connection to convex programming. A fundamental algorithm for solving \eqref{eq:main} is the proximal point algorithm (PPA). Many important convex optimization algorithms have been found to be closely related to PPA. For example, the augmented Lagrangian method (ALM), the alternating direction method of multipliers (ADMM), and the Chambolle--Pock method (also known as the primal-dual hybrid gradient method, PDHG) have been proven to be special instances of PPA; see \cite{rockafellar78, eckstein92, chambolle11, chambolle16, connor20}. In addition, several first-order methods for solving variational inequality problems, such as the extra-gradient method and the optimistic gradient descent ascent method, are also closely related to PPA, as detailed in \cite{mokhtari20}. For more theory and applications of PPA, we refer to the textbooks \cite{bauschke17,ryu20} and the overview \cite{cai2022}.

To give a concise overview of PPA and its extensions, we first introduce some basic concepts. The proximal point operator of a maximally monotone operator $A$ is defined as $J_{sA}:=(I+sA)^{-1}$, where $s>0$ is a parameter. Specifically, if $u=J_{sA}(x)$, then $x-u\in sA(u)$. By Proposition 23.8 in \cite{bauschke17} and Minty's surjective theorem \cite{minty62}, $J_{sA}$ is a single-valued mapping defined on $\mathcal{H}$. Moreover, by the definition of $J_{sA}$, $x^*$ is a solution of \eqref{eq:main} if and only if $x^*=J_{sA}(x^*)$. Throughout this paper, we assume that $J_{sA}$ is easy to compute for some specific $s$. For simplicity, we assume $s=1$.

The iteration formula of the proximal point algorithm (PPA) is given by:
\begin{equation}
	x_{k+1} = J_A(x_k),
	\label{eq:PPA}
\end{equation}
which can be viewed as applying the Banach-Picard iteration to \(J_A\). In \cite{baillon96}, it was shown that PPA exhibits an \(O(1/k)\) convergence rate for \(\|x_{k+1} - x_k\|^2\). The quantity $\norm{x_{k+1}-x_k}^2$ can characterize the convergence rate of PPA stems from the property that \(x_k - x_{k+1} \in A(x_{k+1})\). More recently, \cite{gu20} provided a tighter convergence rate bound:
\[
\|x_{k+1} - x_k\|^2 \leq \frac{1}{(k+1)\left(1+\frac{1}{k}\right)^k} \cdot \text{dist}(x_0, \zero(A))^2,
\]
where \(\text{dist}(x_0, \zero(A)) = \inf_{x \in \zero(A)} \|x_0 - x\|\). We refer to the textbooks \cite{bauschke17} and \cite{ryu20} for the proof of the convergence rate of PPA.

Nesterov's accelerated gradient method \cite{nesterov83}, which achieves a faster convergence rate than standard gradient descent while maintaining the same computational complexity, has stimulated extensive research into accelerated variants of PPA. We first introduce a well-known acceleration technique: Halpern iteration. Halpern iteration, introduced in \cite{halpern67}, is an effective method for finding fixed points of nonexpansive operators \(T\). Its iteration scheme is
\begin{equation}
	x_{k+1} = \alpha_k x_0 + (1 - \alpha_k) T(x_k), \quad \alpha_k \in (0,1).
	\label{eq:halperniteration}
\end{equation}
Studies such as \cite{lieder21} and \cite{qi21} showed, from different perspectives, that the convergence rate of Halpern iteration for \(\norm{x_k - T(x_k)}^2\) is \(O(1/k^2)\). Since $T=2J_A-I$ is nonexpansive (Corollary 23.11 in \cite{bauschke17}), we obtain the accelerated PPA originally proposed in \cite{kim21} by setting $T=2J_A-I$ in \eqref{eq:halperniteration}; see Section 12.2 of \cite{ryu20} for details. Several works use Halpern iteration to accelerate other algorithms, such as \cite{cai24, lee21, park22, tran21, yoon21}. Some works focus on using Halpern iteration to develop efficient algorithms for specific problems, such as \cite{zhang24,liang24,chen2025,lu2024,sun2025a}. Moreover, several works further extend Halpern iteration. For example, \cite{he2024} developed an adaptive version of Halpern iteration and showed that its convergence rate with respect to $\norm{x_k-T(x_k)}^2$ is at least $O(1/k^2)$. In addition, \cite{park22} proposed OC-Halpern for finding fixed points of contractions and OS-PPM for solving \eqref{eq:main} when $A$ is also strongly monotone, i.e., there exists $\mu>0$ such that \[
\inner{u-v,x-y}\geq\mu\norm{x-y}^2,\quad\forall x, y\in\mathcal{H}, u\in A(x), v\in A(y).
\]

Several other acceleration techniques for PPA have also been developed. Here, we introduce two approaches derived from ordinary differential equations (ODEs). Early work by \cite{su16} linked Nesterov's accelerated gradient method with a second-order ODE and introduced the Lyapunov function technique to demonstrate the $O(1/k^2)$ convergence rate of the function value for the Nesterov's accelerated gradient method. Recent studies have increasingly utilized Lyapunov functions to establish convergence properties of optimization algorithms; see, e.g., \cite{attouch16,chen23,li2026,shi19}. Given the theoretical depth of ODEs, several studies have focused on deriving accelerated versions of PPA from discretizations of ODEs. For instance, \cite{attouch192, attouch21} used the Yosida approximation of $A$ to propose second-order ODEs and then obtained accelerated PPAs by properly discretizing these ODEs. However, the parameter of the proximal point operator in the resulting accelerated PPAs changes with the iterations, which may limit their applications in deriving accelerated convex optimization algorithms; see \cite{eckstein92} for a discussion. More recently, \cite{bot23} proposed a new accelerated PPA framework called Fast-KM. They used the ODE proposed in \cite{bot232} to obtain a new ODE and applied the forward Euler method to obtain Fast-KM. They proved that the convergence rate of Fast-KM for $\norm{x_k-J_A(x_k)}^2$ is $o(1/k^2)$. In addition, \cite{yuan2026} studied a new ODE for \eqref{eq:main} based on the ODE in \cite{yuan2025} and used the symplectic Euler method to discretize it, obtaining the symplectic PPA (SPPA). The authors also proved that the convergence rate of SPPA for $\norm{a_k}^2$, where $a_k\in A(x_k)$, is $o(1/k^2)$.

\subsection{Our Motivations and Contributions}

Motivated by the adaptive Halpern iteration in \cite{he2024}, which improves the numerical performance of Halpern iteration, we develop an adaptive version of the symplectic proximal point algorithm (SPPA), called the adaptive symplectic proximal point algorithm (ASPPA, Algorithm \ref{al:sppa}). Unlike SPPA, ASPPA adaptively selects the parameter $\varphi_{k+1}$ by solving a quadratic equation, allowing the algorithm to adjust its acceleration according to the local geometry of the problem.

Our main contributions are summarized as follows.

\begin{itemize}
	\item We prove that the squared norm term $\norm{a_k}^2$ generated by ASPPA converges at rate $O(1/k^2)$, which matches the convergence rate of several accelerated PPAs. Moreover, under an additional summability condition, we further establish $\lim_{k\to\infty} k^2\norm{a_k}^2=0$, i.e., an $o(1/k^2)$ convergence rate.
	\item We establish that if $A$ is $\mu$-strongly monotone, ASPPA exhibits a linear (exponential) convergence rate. Compared with OS-PPM \cite{park22}, our algorithm does not require prior knowledge of the strong monotonicity parameter $\mu$.
	\item We conduct numerical experiments on strongly monotone inclusions, LASSO, and basis pursuit. The results show that ASPPA and its induced ADMM/PDHG variants often exhibit faster initial convergence and better numerical performance than several existing accelerated methods.
\end{itemize}
\section{Adaptive Symplectic Proximal Point Algorithm}
\begin{algorithm}[H]
	\caption{Adaptive Symplectic Proximal Point Algorithm, ASPPA}
	\label{al:sppa}
	\textbf{Initialize:} $x_0, z_0=x_0$, $\varphi_0=0$, $a_0=0$, $r>1$, $c\in (0,r-1)$, $\alpha\in[0,1)$.
	\For{$k=0, 1, \cdots$}{
		$\tilde{x}_{k+1}=\dfrac{\varphi_k}{\varphi_k+r}x_k+\dfrac{r}{\varphi_k+r}z_k$.\\
		$x_{k+1}=J_A(\tilde{x}_{k+1})$.\\
		$a_{k+1}=\tilde{x}_{k+1}-x_{k+1}$.\\
		\If{$a_{k+1}=0$}{
		Break the loop.\\
		}
		Solving $\varphi_{k+1}$ such that\[
		f_{k+1}(\varphi_{k+1}):=g_{k+1}^1\varphi^2_{k+1}+g_{k+1}^2\varphi_{k+1}+g_{k+1}^3=0\text{ with }  \varphi_{k+1}\geq\varphi_k+1,
		\]
		where
		\begin{align*}
			g_{k+1}^1 =&\  \frac{(r+1)c+1}{2r^2}\norm{a_{k+1}}^2,  \\
			g_{k+1}^2 =&\ \frac{r(1-2c\varphi_k)+2\alpha(r-1-c)\varphi_k}{2r^2}\norm{a_{k+1}}^2+\inner{a_{k+1},x_{k+1}-z_k},\\
			g_{k+1}^3 =&\ \frac{(r-1)\varphi_k^2c-(\varphi_k+2r)(\varphi_k+r)}{2r^2}\norm{a_{k+1}}^2-(\varphi_k+1)\inner{a_{k+1},x_{k+1}-z_k}\\
			&\ -\frac{\varphi_k(\varphi_k+r)}{r^2}\inner{a_{k+1}-a_k,x_{k+1}-x_k}-\frac{\varphi_k(\varphi_k+r)}{2r^2}\norm{a_{k+1}-a_k}^2\\
			&\ -\frac{\alpha[2(r-1-c)\varphi_k^2-2r^2+(r+1)(c+1)]}{2r^2}\norm{a_{k+1}}^2.
		\end{align*}
		$z_{k+1}=z_k-\dfrac{c(\varphi_{k+1}-\varphi_k)}{r}a_{k+1}$.
	}
	\textbf{Output:} $x_{k+1}$.
\end{algorithm}

\section{Convergence Results of Algorithm \ref{al:sppa}}
\subsection{Well-definedness of Algorithm \ref{al:sppa}}

Before studying the convergence rate of Algorithm \ref{al:sppa}, we need to show that the algorithm is well-defined, i.e., if $x_{k+1}\notin\zero(A)$, there exists $\varphi_{k+1}\geq\varphi_k+1$ such that $f_{k+1}(\varphi_{k+1})=0$.

\begin{lemma}
	\label{lem:proper}
	Algorithm \ref{al:sppa} is well-defined, i.e., if $x_{k+1}\notin\zero(A)$, there exists $\varphi_{k+1}\geq\varphi_k+1$ such that $f_{k+1}(\varphi_{k+1})=0$.
\end{lemma}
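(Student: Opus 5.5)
The plan is to exploit that $f_{k+1}$ is a quadratic with strictly positive leading coefficient, show that $f_{k+1}(\varphi_k+1)\le 0$, and conclude by the intermediate value theorem. If $x_{k+1}\notin\zero(A)$, then $a_{k+1}\neq 0$: otherwise $x_{k+1}=\tilde{x}_{k+1}=J_A(x_{k+1})$, i.e.\ $0\in A(x_{k+1})$. Hence
\[
g^1_{k+1}=\frac{(r+1)c+1}{2r^2}\norm{a_{k+1}}^2>0,
\]
because $c>0$, and so $f_{k+1}(\varphi)\to+\infty$ as $\varphi\to\infty$. Once $f_{k+1}(\varphi_k+1)\le 0$ is established, continuity yields a root $\varphi_{k+1}\ge\varphi_k+1$. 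That root is the larger root of the quadratic, so in practice it is explicitly computable.

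The first key step is to evaluate $f_{k+1}(\varphi_k+1)$ and sort the terms by type. Write $\varphi=\varphi_k$. The terms with $\inner{a_{k+1},x_{k+1}-z_k}$ come from $g^2_{k+1}$ with factor $(\varphi+1)$ and from $g^3_{k+1}$ with factor $-(\varphi+1)$, so they cancel exactly. Next, $a_{k+1}=\tilde x_{k+1}-x_{k+1}\in A(x_{k+1})$ by definition of $J_A$. For $k\ge1$ we also have $a_k\in A(x_k)$; for $k=0$, $\varphi_0=0$ kills the corresponding terms. Monotonicity then gives
\[
-\frac{\varphi(\varphi+r)}{r^2}\inner{a_{k+1}-a_k,x_{k+1}-x_k}\le 0,
\]
and the term $-\frac{\varphi(\varphi+r)}{2r^2}\norm{a_{k+1}-a_k}^2$ is trivially nonpositive.

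The remaining step, which is the only real computation, is the coefficient of $\frac{1}{2r^2}\norm{a_{k+1}}^2$:
\[
((r+1)c+1)(\varphi+1)^2+\bigl(r(1-2c\varphi)+2\alpha(r-1-c)\varphi\bigr)(\varphi+1)+(r-1)c\varphi^2-(\varphi+2r)(\varphi+r)-\alpha\bigl[2(r-1-c)\varphi^2-2r^2+(r+1)(c+1)\bigr].
\]
Expanding, the $\varphi^2$ terms should cancel identically, both in the $\alpha$-free part and in the $\alpha$-part. What should remain is
\[
-2(1-\alpha)(r-1-c)\varphi-(1-\alpha)\bigl(2r^2-(r+1)(c+1)\bigr).
\]
Every factor here has a definite sign. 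First, $1-\alpha>0$ and $r-1-c>0$ by the parameter choices. Second, $0<c<r-1$ gives $(r+1)(c+1)<(r+1)r<2r^2$, using $r>1$. Hence this coefficient is strictly negative.

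Combining the three contributions then gives $f_{k+1}(\varphi_k+1)<0$, which completes the argument. I expect the main obstacle to be only the bookkeeping in the expansion, in particular verifying the exact cancellation of the $\varphi^2$ terms. The structural points are the cancellation of the $\inner{a_{k+1},x_{k+1}-z_k}$ terms and the monotonicity term, and both are immediate.
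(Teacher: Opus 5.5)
Your proposal is correct and follows the paper's proof essentially step for step. You evaluate $f_{k+1}(\varphi_k+1)$ and reduce the $\norm{a_{k+1}}^2$ coefficient to $-\frac{(1-\alpha)[2(r-1-c)\varphi_k+2r^2-(r+1)(c+1)]}{2r^2}$ (your expansion checks out), dispose of the remaining terms by monotonicity for $k\ge 1$ and by $\varphi_0=0$ for $k=0$, and conclude from the upward-opening quadratic. The only cosmetic difference is that the paper first rewrites $f_{k+1}$ in Lyapunov-difference form, where the $\inner{a_{k+1},x_{k+1}-z_k}$ term carries the factor $(\varphi_{k+1}-\varphi_k-1)$, whereas you substitute directly into $g^1_{k+1},g^2_{k+1},g^3_{k+1}$; both arguments implicitly use $\varphi_k\ge 0$, which follows by induction from $\varphi_0=0$.
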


\begin{proof}
	First, according to the definition of $g_{k+1}^1$, $g_{k+1}^2$, $g_{k+1}^3$, the function $f_{k+1}$ can be written as
	\begin{align*}
		&\ f_{k+1}(\varphi_{k+1})\\
		=&\ \left(\frac{\varphi_{k+1}(\varphi_{k+1}+r)}{2r^2}+\frac{\varphi_{k+1}(\varphi_{k+1}-\varphi_k)c}{r^2}+\frac{(r-1)(\varphi_{k+1}-\varphi_k)^2c}{2r^2}-\frac{(\varphi_k+2r)(\varphi_k+r)}{2r^2}\right)\norm{a_{k+1}}^2 \\
		&\ +(\varphi_{k+1}-\varphi_k-1)\inner{a_{k+1},x_{k+1}-z_k}-\frac{\varphi_k(\varphi_k+r)}{r^2}\inner{a_{k+1}-a_k,x_{k+1}-x_k}\\
		&\ -\frac{\varphi_k(\varphi_k+r)}{2r^2}\norm{a_{k+1}-a_k}^2+\frac{\alpha[2(r-1-c)(\varphi_{k+1}-\varphi_k)\varphi_k+2r^2-(r+1)(c+1)]}{2r^2}\norm{a_{k+1}}^2.
	\end{align*}
	Taking $\varphi_{k+1}=\varphi_k+1$, we obtain
	\begin{align*}
		f_{k+1}(\varphi_k+1)
		=&\ \left(\frac{(\varphi_k+1)(\varphi_k+r+1)}{2r^2}+\frac{(\varphi_k+1)c}{r^2}+\frac{(r-1)c}{2r^2}-\frac{(\varphi_k+2r)(\varphi_k+r)}{2r^2}\right)\norm{a_{k+1}}^2\\
		&\ -\frac{\varphi_k(\varphi_k+r)}{2r^2}\norm{a_{k+1}-a_k}^2-\frac{\varphi_k(\varphi_k+r)}{r^2}\inner{a_{k+1}-a_k,x_{k+1}-x_k}\\
		&\ +\frac{\alpha[2(r-1-c)\varphi_k+2r^2-(r+1)(c+1)]}{2r^2}\norm{a_{k+1}}^2\\
		=&\ -\frac{(1-\alpha)[2(r-1-c)\varphi_k+2r^2-(r+1)(c+1)]}{2r^2}\norm{a_{k+1}}^2\\
		&\ -\frac{\varphi_k(\varphi_k+r)}{2r^2}\norm{a_{k+1}-a_k}^2-\frac{\varphi_k(\varphi_k+r)}{r^2}\inner{a_{k+1}-a_k,x_{k+1}-x_k}.
	\end{align*}
	Since $c<r-1$, we have $r-1-c>0$ and
	\[
	2r^2-(r+1)(c+1)>2r^2-(r+1)r=r^2-r>0.
	\]
	As $\alpha\in[0,1)$, we have $1-\alpha>0$, and hence
	\[
	-\frac{(1-\alpha)[2(r-1-c)\varphi_k+2r^2-(r+1)(c+1)]}{2r^2}<0.
	\]
	If $k\geq 1$, then from $x_{k+1}=J_A(\tilde{x}_{k+1})$ we have $a_{k+1}=\tilde{x}_{k+1}-x_{k+1}\in A(x_{k+1})$, and similarly $a_k\in A(x_k)$. Since $A$ is monotone,
	\[
	\inner{a_{k+1}-a_k,x_{k+1}-x_k}\geq 0.
	\]
	Therefore the last two terms in $f_{k+1}(\varphi_k+1)$ are non-positive, and the first term is strictly negative. Since $x_{k+1}\notin\zero(A)$, we have $a_{k+1}\neq 0$, which implies
	\[
	f_{k+1}(\varphi_k+1)<0.
	\]
	If $k=0$, then $\varphi_0=0$ and $a_0=0$, so
	\[
	f_{1}(1)=-\frac{(1-\alpha)[2r^2-(r+1)(c+1)]}{2r^2}\norm{a_1}^2<0.
	\]
	In both cases, since $g_{k+1}^1=\frac{(r+1)c+1}{2r^2}\norm{a_{k+1}}^2>0$, the function $f_{k+1}$ is a quadratic function opening upwards. Together with $f_{k+1}(\varphi_k+1)<0$ and $f_{k+1}(\varphi)\to+\infty$ as $\varphi\to+\infty$, there exists $\varphi_{k+1}\geq\varphi_k+1$ such that $f_{k+1}(\varphi_{k+1})=0$. This completes the proof.
\end{proof}

As a result of Lemma \ref{lem:proper}, the recursive rule of $\varphi_{k+1}$ is
\begin{equation}
	\varphi_{k+1}=\frac{-g_{k+1}^2+\sqrt{(g_{k+1}^2)^2-4g_{k+1}^1g_{k+1}^3}}{2g_{k+1}^1}.
\end{equation}
Here the larger root is chosen so that $\varphi_{k+1}\geq\varphi_k+1$, and the existence of such a root is guaranteed by Lemma \ref{lem:proper}.
\subsection{Last-Iterate Convergence Rate}

Next, we study the convergence rate of Algorithm \ref{al:sppa}. Similar to the proof of the convergence rate of SPPA, we consider the following Lyapunov function:
\begin{equation}
	\E(k):=\frac{(\varphi_k+r)\varphi_k}{2r^2}\norm{a_k}^2+\frac{\varphi_k}{r}\inner{a_k,x_k-z_k}+\frac{r-1}{2c}\norm{z_k-x^*}^2,
	\label{eq:lyapunovdiscrete}
\end{equation}
where $x^*\in\zero(A)$.

\begin{lemma}
	\label{lem:lyapunovdiscretenonnegative}
	Let $A$ be a maximally monotone operator, and let $\{x_k\}$ and $\{z_k\}$ be the sequences generated by Algorithm \ref{al:sppa}. The Lyapunov function $\E(k)$ given in \eqref{eq:lyapunovdiscrete} can be represented as
	\begin{align*}
		\E(k)=&\ \frac{(r-1-c)\varphi_k^2+r(r-1)\varphi_k}{2r^2(r-1)}\norm{a_k}^2+\frac{\varphi_k}{r}\inner{a_k,x_k-x^*}\\
		&\ +\frac{1}{2}\norm{\frac{\varphi_k}{r}\sqrt{\frac{c}{r-1}}a_k+\sqrt{\frac{r-1}{c}}(x^*-z_k)}^2.
	\end{align*}
	If $r>1$ and $0<c<r-1$, then $\E(k)$ is nonnegative.
\end{lemma}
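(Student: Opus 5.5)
The plan is to prove the identity by direct algebraic expansion and then read off nonnegativity term by term.

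\emph{Expanding the square.} I would start from the right-hand side and expand the squared norm:
\[
\frac{1}{2}\norm{\frac{\varphi_k}{r}\sqrt{\frac{c}{r-1}}a_k+\sqrt{\frac{r-1}{c}}(x^*-z_k)}^2
=\frac{c\varphi_k^2}{2r^2(r-1)}\norm{a_k}^2+\frac{\varphi_k}{r}\inner{a_k,x^*-z_k}+\frac{r-1}{2c}\norm{z_k-x^*}^2 .
\]
The cross term comes out cleanly because $\sqrt{c/(r-1)}\cdot\sqrt{(r-1)/c}=1$.

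\emph{Matching with \eqref{eq:lyapunovdiscrete}.} Next I combine the inner-product pieces:
\[
\frac{\varphi_k}{r}\inner{a_k,x_k-x^*}+\frac{\varphi_k}{r}\inner{a_k,x^*-z_k}=\frac{\varphi_k}{r}\inner{a_k,x_k-z_k},
\]
which is exactly the middle term of $\E(k)$. For the coefficient of $\norm{a_k}^2$, I add
\[
\frac{(r-1-c)\varphi_k^2+r(r-1)\varphi_k}{2r^2(r-1)}+\frac{c\varphi_k^2}{2r^2(r-1)}=\frac{(r-1)\varphi_k^2+r(r-1)\varphi_k}{2r^2(r-1)}=\frac{\varphi_k(\varphi_k+r)}{2r^2}.
\]
This matches the first term of $\E(k)$, and the term $\frac{r-1}{2c}\norm{z_k-x^*}^2$ matches trivially. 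Together these give the claimed representation.

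\emph{Nonnegativity.} Here I would argue term by term.
\begin{itemize}
\item[(i)] The sequence $\varphi_k\ge 0$: we have $\varphi_0=0$, and Lemma \ref{lem:proper} guarantees $\varphi_{k+1}\ge\varphi_k+1$.
\item[(ii)] Since $0<c<r-1$ and $r>1$, the coefficient $\frac{(r-1-c)\varphi_k^2+r(r-1)\varphi_k}{2r^2(r-1)}$ is nonnegative.
\item[(iii)] For $\inner{a_k,x_k-x^*}$: when $k\ge1$, $a_k=\tilde{x}_k-x_k\in A(x_k)$ by the definition of $J_A$. Also $0\in A(x^*)$, so monotonicity gives $\inner{a_k-0,x_k-x^*}\ge0$. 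When $k=0$, the term vanishes because $\varphi_0=0$ (and $a_0=0$).
\item[(iv)] The last term is a squared norm, so it is nonnegative.
\end{itemize}

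The main obstacle is negligible. The only point needing care is the $k=0$ case in step (iii), where $a_0$ is not defined through the resolvent; it is handled by $\varphi_0=0$ and $a_0=0$.
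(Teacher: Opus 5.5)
Your proposal is correct and follows essentially the same route as the paper, which applies the polarization identity to $\frac{\varphi_k}{r}\inner{a_k,x^*-z_k}$ to form the completed square (your expansion read in reverse) and then argues nonnegativity term by term via monotonicity. Your explicit justification of $\varphi_k\ge 0$, and your separate handling of the $k=0$ case (where $a_0$ is not produced by the resolvent), fill in details the paper leaves implicit.
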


\begin{proof}
	Using the equality
	\[
	\inner{x,y}=\frac{1}{2}\norm{x+y}^2-\frac{1}{2}\norm{x}^2-\frac{1}{2}\norm{y}^2
	\]
	with
	\begin{align*}
		x &=  \sqrt{\frac{c\varphi_k^2}{r^2(r-1)}}a_k, \\
		y &= \sqrt{\frac{r-1}{c}}(x^*-z_k),
	\end{align*}
	we obtain
	\begin{align*}
		\E(k)= &\ \frac{\varphi_k(\varphi_k+r)}{2r^2}\norm{a_k}^2+\frac{\varphi_k}{r}\inner{a_k,x_k-x^*}+\frac{\varphi_k}{r}\inner{a_k,x^*-z_k}+\frac{r-1}{2c}\norm{z_k-x^*}^2\\
		=&\ \frac{(r-1-c)\varphi_k^2+r(r-1)\varphi_k}{2r^2(r-1)}\norm{a_k}^2+\frac{\varphi_k}{r}\inner{a_k,x_k-x^*}\\
		&\ +\frac{1}{2}\norm{\sqrt{\frac{c\varphi_k^2}{r^2(r-1)}}a_k+\sqrt{\frac{r-1}{c}}(x^*-z_k)}^2.
	\end{align*}
	Since $0<c<r-1$,
	\[
	\frac{(r-1-c)\varphi_k^2+r(r-1)\varphi_k}{2r^2(r-1)}\geq 0,\quad\forall k\geq 0.
	\]
	Moreover, because $x^*\in\zero(A)$ and $A$ is monotone, we have
	\[
	\inner{a_k,x_k-x^*}=\inner{a_k-0,x_k-x^*}\geq 0.
	\]
	Therefore $\E(k)$ is a sum of three nonnegative terms, which implies $\E(k)\geq 0$.
\end{proof}

\begin{theorem}
	\label{thm:lyapunovdiscrete}
	Let $A$ be a maximally monotone operator, and let $\{x_k\}$ and $\{z_k\}$ be the sequences generated by Algorithm \ref{al:sppa}. If $r>1$, $0<c<r-1$, and $\alpha\in[0,1)$, then
	\begin{equation}
		\label{eq:differenceestimation}
		\begin{split}
			&\ \E(k+1)-\E(k)\\
			=&\ -\frac{\alpha[2(r-1-c)(\varphi_{k+1}-\varphi_k)\varphi_k+2r^2-(r+1)(c+1)]}{2r^2}\norm{a_{k+1}}^2\\
			&\ -\frac{(r-1)(\varphi_{k+1}-\varphi_k)}{r}\inner{a_{k+1},x_{k+1}-x^*}\\
			\leq&\ 0.
		\end{split}
	\end{equation}
	Also,
	\begin{equation}
		\norm{a_k}^2\leq\frac{r^2(r-1)^2\norm{x_0-x^*}^2}{c[(r-1-c)\varphi_k^2+r(r-1)\varphi_k]}.
		\label{eq:lastiterate}
	\end{equation}
	Moreover, if $\alpha>0$, then
	\begin{equation}
		\sum_{k=0}^\infty(\varphi_{k+1}-\varphi_k)\varphi_k\norm{a_{k+1}}^2<+\infty.
		\label{eq:sumalpha}
	\end{equation}
\end{theorem}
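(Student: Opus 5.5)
We compute $\E(k+1)-\E(k)$ directly from the update rules of Algorithm \ref{al:sppa} and then use the defining equation $f_{k+1}(\varphi_{k+1})=0$ to cancel everything except the two claimed terms.

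\emph{Increments.} Write $\Delta_k:=\varphi_{k+1}-\varphi_k\ge 1$. From the first line of the algorithm, $(\varphi_k+r)\tilde x_{k+1}=\varphi_k x_k+r z_k$, and $\tilde x_{k+1}=x_{k+1}+a_{k+1}$. Hence
\[
r(z_k-x_{k+1})=\varphi_k(x_{k+1}-x_k)+(\varphi_k+r)a_{k+1}.
\]
This identity lets us eliminate $x_k$ in favour of $x_{k+1}$, $z_k$ and $a_{k+1}$. The $z$-update gives $z_{k+1}-x^*=(z_k-x^*)-\frac{c\Delta_k}{r}a_{k+1}$, so
\[
\frac{r-1}{2c}\Big(\norm{z_{k+1}-x^*}^2-\norm{z_k-x^*}^2\Big)=-\frac{(r-1)\Delta_k}{r}\inner{a_{k+1},z_k-x^*}+\frac{(r-1)c\Delta_k^2}{2r^2}\norm{a_{k+1}}^2 .
\]
For the cross term we use $x_{k+1}-z_{k+1}=x_{k+1}-z_k+\frac{c\Delta_k}{r}a_{k+1}$. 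For the old term we split
\[
\inner{a_k,x_k-z_k}=\inner{a_k,x_k-x_{k+1}}+\inner{a_k,x_{k+1}-z_k},
\]
and write $a_k=a_{k+1}-(a_{k+1}-a_k)$ wherever this produces the quantities $\inner{a_{k+1}-a_k,x_{k+1}-x_k}$ and $\norm{a_{k+1}-a_k}^2$. Finally, we write $\inner{a_{k+1},z_k-x^*}=\inner{a_{k+1},z_k-x_{k+1}}+\inner{a_{k+1},x_{k+1}-x^*}$. This isolates the term $-\frac{(r-1)\Delta_k}{r}\inner{a_{k+1},x_{k+1}-x^*}$.

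\emph{Matching with $f_{k+1}$.} After these substitutions, the remainder (everything except the $x^*$-term) should be a combination of $\norm{a_{k+1}}^2$, $\inner{a_{k+1},x_{k+1}-z_k}$, $\inner{a_{k+1}-a_k,x_{k+1}-x_k}$ and $\norm{a_{k+1}-a_k}^2$. We aim to show it equals
\[
f_{k+1}(\varphi_{k+1})-\frac{\alpha[2(r-1-c)\Delta_k\varphi_k+2r^2-(r+1)(c+1)]}{2r^2}\norm{a_{k+1}}^2,
\]
using the rewritten form of $f_{k+1}$ given in the proof of Lemma \ref{lem:proper}. Since $f_{k+1}(\varphi_{k+1})=0$ by construction, this yields the identity \eqref{eq:differenceestimation}.

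This coefficient matching is the main obstacle: it is a tedious bookkeeping of the $\varphi$-polynomials multiplying each of the four quantities. I would organize it by first collecting the coefficient of $\inner{a_{k+1},x_{k+1}-z_k}$, which should come out to $\Delta_k-1$, and then the coefficient of $\norm{a_{k+1}}^2$, checking that it matches the bracket in Lemma \ref{lem:proper}.

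\emph{Sign.} The inequality $\le 0$ follows term by term. Since $\Delta_k\ge 0$, $r-1-c>0$ and $2r^2-(r+1)(c+1)>0$ (as in Lemma \ref{lem:proper}), the first term is nonpositive. Monotonicity with $0\in A(x^*)$ and $a_{k+1}\in A(x_{k+1})$ gives $\inner{a_{k+1},x_{k+1}-x^*}\ge 0$, so the second term is nonpositive as well.

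\emph{Rate bound \eqref{eq:lastiterate}.} Monotonicity gives $\E(k)\le\E(0)=\frac{r-1}{2c}\norm{x_0-x^*}^2$, since $\varphi_0=0$ and $z_0=x_0$. By Lemma \ref{lem:lyapunovdiscretenonnegative}, the last two terms of $\E(k)$ are nonnegative, so
\[
\frac{(r-1-c)\varphi_k^2+r(r-1)\varphi_k}{2r^2(r-1)}\norm{a_k}^2\le \frac{r-1}{2c}\norm{x_0-x^*}^2 .
\]
Rearranging gives exactly \eqref{eq:lastiterate}.

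\emph{Summability \eqref{eq:sumalpha}.} Telescope \eqref{eq:differenceestimation} and use $\E\ge 0$ to get
\[
\sum_{k\ge 0}\frac{\alpha[2(r-1-c)\Delta_k\varphi_k+2r^2-(r+1)(c+1)]}{2r^2}\norm{a_{k+1}}^2\le\E(0).
\]
Every summand is nonnegative, so dropping the positive constant part and dividing by $\alpha(r-1-c)/r^2>0$ yields \eqref{eq:sumalpha}.
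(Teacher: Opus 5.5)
Your proposal is correct and follows the paper's proof essentially step for step. You expand $\E(k+1)-\E(k)$ using the $z$-update and the identity $r(z_k-x_{k+1})=\varphi_k(x_{k+1}-x_k)+(\varphi_k+r)a_{k+1}$, identify the remainder as $f_{k+1}(\varphi_{k+1})$ minus the $\alpha$-term via the rewritten form of $f_{k+1}$ from Lemma \ref{lem:proper}, and then get the rate from Lemma \ref{lem:lyapunovdiscretenonnegative} and the summability by telescoping, exactly as the paper does.

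The bookkeeping you deferred does close: the $\norm{a_k}^2$ terms cancel and the coefficient of $\inner{a_{k+1},x_{k+1}-z_k}$ is $\Delta_k-1$. One correction to your description: in the $(a_{k+1}-a_k)$-terms you must use the identity to eliminate $z_k$, not $x_k$, which is what the paper's companion identity $r(z_k-x_k)=(\varphi_k+r)(a_{k+1}+x_{k+1}-x_k)$ does.
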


\begin{proof}
	Step 1: Decompose the difference $\E(k+1)-\E(k)$ into three parts:
	\begin{align*}
		\E(k+1)-\E(k)= &\  \frac{\varphi_{k+1}(\varphi_{k+1}+r)}{2r^2}\norm{a_{k+1}}^2-\frac{\varphi_k(\varphi_k+r)}{2r^2}\norm{a_k}^2\\
		&\ +\underbrace{\frac{\varphi_{k+1}}{r}\inner{a_{k+1},x_{k+1}-z_{k+1}}-\frac{\varphi_k}{r}\inner{a_k,x_k-z_k}}_{\text{I}}\\
		&\ +\underbrace{\frac{r-1}{2c}\left(\norm{z_{k+1}-x^*}^2-\norm{z_k-x^*}^2\right)}_{\text{II}}.
	\end{align*}
	
	Step 2: Transform I and II. First, for I, using $z_{k+1}=z_k-\frac{c(\varphi_{k+1}-\varphi_k)}{r}a_{k+1}$, we have
	\begin{align*}
		\text{I}= &\ \frac{\varphi_{k+1}}{r}\inner{a_{k+1},x_{k+1}-z_k}-\frac{\varphi_k}{r}\inner{a_k,x_k-z_k}+\frac{\varphi_{k+1}(\varphi_{k+1}-\varphi_k)c}{r^2}\norm{a_{k+1}}^2\\
		=&\ \frac{\varphi_{k+1}-\varphi_k}{r}\inner{a_{k+1},x_{k+1}-z_k}+\frac{\varphi_k}{r}\inner{a_{k+1},x_{k+1}-x_k}+\frac{\varphi_k}{r}\inner{a_{k+1}-a_k,x_k-z_k}\\
		&\ +\frac{\varphi_{k+1}(\varphi_{k+1}-\varphi_k)c}{r^2}\norm{a_{k+1}}^2.
	\end{align*}
	By using the equality
	\[
	\varphi_k(x_{k+1}-x_k)=r(z_k-x_{k+1})-(\varphi_k+r)a_{k+1},
	\]
	we have
	\[
	\frac{\varphi_k}{r}\inner{a_{k+1}, x_{k+1}-x_k}=\frac{1}{r}\inner{a_{k+1},-(\varphi_k+r)a_{k+1}+r(z_k-x_{k+1})}.
	\]
	Also, from the equality
	\[
	r(z_k-x_k)=(\varphi_k+r)(a_{k+1}+x_{k+1}-x_k),
	\]
	it follows that
		\[
	\frac{\varphi_k}{r}\inner{a_{k+1}-a_k,x_k-z_k}=-\frac{\varphi_k(\varphi_k+r)}{r^2}\inner{a_{k+1}-a_k,a_{k+1}+x_{k+1}-x_k}.
	\]
	Based on the previous discussion, we obtain
	\begin{align*}
		\text{I}= &\  \frac{\varphi_{k+1}-\varphi_k-r}{r}\inner{a_{k+1},x_{k+1}-z_k}-\frac{\varphi_k+r}{r}\norm{a_{k+1}}^2-\frac{\varphi_k(\varphi_k+r)}{r^2}\inner{a_{k+1}-a_k,a_{k+1}}\\
		&\ -\frac{\varphi_k(\varphi_k+r)}{r^2}\inner{a_{k+1}-a_k,x_{k+1}-x_k}+\frac{\varphi_{k+1}(\varphi_{k+1}-\varphi_k)c}{r^2}\norm{a_{k+1}}^2\\
		=&\ (\varphi_{k+1}-\varphi_k-1)\inner{a_{k+1},x_{k+1}-z_k}-\frac{(r-1)(\varphi_{k+1}-\varphi_k)}{r}\inner{a_{k+1},x_{k+1}-z_k}\\
		&\ -\left(\frac{\varphi_k(\varphi_k+r)}{2r^2}+\frac{\varphi_k+r}{r}-\frac{\varphi_{k+1}(\varphi_{k+1}-\varphi_k)c}{r^2}\right)\norm{a_{k+1}}^2+\frac{\varphi_k(\varphi_k+r)}{2r^2}\norm{a_k}^2\\
		&\ -\frac{\varphi_k(\varphi_k+r)}{2r^2}\norm{a_{k+1}-a_k}^2-\frac{\varphi_k(\varphi_k+r)}{r^2}\inner{a_{k+1}-a_k,x_{k+1}-x_k}.
	\end{align*}
	Now for II:
	\begin{align*}
		\text{II} & =\frac{r-1}{c}\inner{z_{k+1}-z_k,z_k-x^*}+\frac{r-1}{2c}\norm{z_{k+1}-z_k}^2\\
		&=-\frac{(r-1)(\varphi_{k+1}-\varphi_k)}{r}\inner{a_{k+1},z_k-x^*}+\frac{(r-1)(\varphi_{k+1}-\varphi_k)^2c}{2r^2}\norm{a_{k+1}}^2.
	\end{align*}
	
	Step 3: Estimate the upper bound of $\E(k+1)-\E(k)$. Combining the above calculations yields
	\begin{align*}
		&\ \E(k+1)-\E(k)\\
		=&\ \left(\frac{\varphi_{k+1}(\varphi_{k+1}+r)}{2r^2}+\frac{\varphi_{k+1}(\varphi_{k+1}-\varphi_k)c}{r^2}+\frac{(r-1)(\varphi_{k+1}-\varphi_k)^2c}{2r^2}-\frac{(\varphi_k+2r)(\varphi_k+r)}{2r^2}\right)\norm{a_{k+1}}^2 \\
		&\ +(\varphi_{k+1}-\varphi_k-1)\inner{a_{k+1},x_{k+1}-z_k}-\frac{(r-1)(\varphi_{k+1}-\varphi_k)}{r}\inner{a_{k+1},x_{k+1}-x^*}\\
		&\ -\frac{\varphi_k(\varphi_k+r)}{r^2}\inner{a_{k+1}-a_k,x_{k+1}-x_k}-\frac{\varphi_k(\varphi_k+r)}{2r^2}\norm{a_{k+1}-a_k}^2.
	\end{align*}
	Using the definition of $\varphi_{k+1}$, we obtain
	\begin{align*}
		&\ \E(k+1)-\E(k)\\
		=&\ -\frac{\alpha[2(r-1-c)(\varphi_{k+1}-\varphi_k)\varphi_k+2r^2-(r+1)(c+1)]}{2r^2}\norm{a_{k+1}}^2\\
		&\ -\frac{(r-1)(\varphi_{k+1}-\varphi_k)}{r}\inner{a_{k+1},x_{k+1}-x^*}\\
		\leq&\ 0.
	\end{align*}
	
	Step 4: Derive the convergence rate. Since $\{\E(k)\}$ is non-increasing and Lemma \ref{lem:lyapunovdiscretenonnegative} holds, we have
	\[
	\frac{(r-1-c)\varphi_k^2+r(r-1)\varphi_k}{2r^2(r-1)}\norm{a_k}^2\leq\E(k)\leq\E(0)=\frac{r-1}{2c}\norm{x_0-x^*}^2.
	\]
	Dividing both sides by $\frac{(r-1-c)\varphi_k^2+r(r-1)\varphi_k}{2r^2(r-1)}$, we obtain \eqref{eq:lastiterate}. Also, by summing the inequality in \eqref{eq:differenceestimation}, we have
	\[
	\sum_{k=0}^\infty(\varphi_{k+1}-\varphi_k)\varphi_k\norm{a_{k+1}}^2\leq\frac{r^2\E(0)}{\alpha(r-1-c)}<+\infty.
	\]
\end{proof}

\begin{corollary}
	\label{coro:1}
	Let $A$ be a maximally monotone operator, and let $\{x_k\}$ and $\{z_k\}$ be the sequences generated by Algorithm \ref{al:sppa}. If $r>1$, $0<c<r-1$, and $\alpha\in(0,1)$, then
	\begin{equation}
		\liminf_{k\to\infty}k^2\norm{a_k}^2=0.
	\end{equation}
\end{corollary}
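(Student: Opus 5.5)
We want $\liminf_{k\to\infty}k^2\norm{a_k}^2=0$, and the plan is a contradiction argument. Suppose the $\liminf$ is positive. Then there exist $\delta>0$ and $K$ such that $k^2\norm{a_k}^2\geq\delta$ for all $k\geq K$. We will compare this with the summability estimate \eqref{eq:sumalpha} from Theorem \ref{thm:lyapunovdiscrete}, which holds because $\alpha>0$:
\[
\sum_{k=0}^\infty(\varphi_{k+1}-\varphi_k)\varphi_k\norm{a_{k+1}}^2<+\infty.
\]

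The first step is to bound the weights from below. Lemma \ref{lem:proper} gives $\varphi_{k+1}\geq\varphi_k+1$, so $\varphi_{k+1}-\varphi_k\geq1$. Since $\varphi_0=0$, induction gives $\varphi_k\geq k$. Hence every term of the series satisfies
\[
(\varphi_{k+1}-\varphi_k)\varphi_k\norm{a_{k+1}}^2\geq k\norm{a_{k+1}}^2,
\]
and therefore
\[
\sum_{k=0}^\infty k\norm{a_{k+1}}^2<+\infty .
\]

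The second step derives the contradiction. Under the assumption, for $k+1\geq K$ we get
\[
k\norm{a_{k+1}}^2\geq \frac{k\,\delta}{(k+1)^2}\geq\frac{\delta}{4(k+1)}
\]
for $k\geq1$. The right-hand side is a multiple of the harmonic series, which diverges. This contradicts the previous step, so $\liminf_{k\to\infty}k^2\norm{a_k}^2=0$.

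We also have to handle the case where the loop breaks, i.e. $a_{k+1}=0$ for some $k$. Then $x_{k+1}\in\zero(A)$ and the sequence is finite, or can be set to zero from then on, so the statement holds trivially. Otherwise Lemma \ref{lem:proper} applies at every step.

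None of these steps is a real obstacle. The only points needing care are the lower bound $\varphi_k\geq k$, which comes from $\varphi_{k+1}\geq\varphi_k+1$, and checking that \eqref{eq:sumalpha} is genuinely available because $\alpha\in(0,1)$ and $r-1-c>0$.
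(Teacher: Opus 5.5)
Your proposal is correct and follows essentially the same argument as the paper: you use \eqref{eq:sumalpha} together with $\varphi_k\geq k$ to get $\sum_k k\norm{a_{k+1}}^2<+\infty$, and then contradict this with a harmonic-series lower bound. Your handling of the index shift, via $k\norm{a_{k+1}}^2\geq \delta/(4(k+1))$ for $k\ge 1$, is slightly more careful than the paper's, which sets $\sum_k k\norm{a_k}^2$ against $\sum_k k\norm{a_{k+1}}^2$ without comment.
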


\begin{proof}
	From \eqref{eq:sumalpha}, we have
	\[
	\sum_{k=0}^\infty(\varphi_{k+1}-\varphi_k)\varphi_k\norm{a_{k+1}}^2<+\infty.
	\]
	Since $\varphi_{k+1}\geq\varphi_k+1$ and $\varphi_0=0$, we have $\varphi_k\geq k$. Hence
	\[
	(\varphi_{k+1}-\varphi_k)\varphi_k\geq\varphi_k\geq k,
	\]
	which implies
	\[
	\sum_{k=0}^\infty k\norm{a_{k+1}}^2
	\leq\sum_{k=0}^\infty(\varphi_{k+1}-\varphi_k)\varphi_k\norm{a_{k+1}}^2
	<+\infty.
	\]
	Suppose, for contradiction, that $\liminf_{k\to\infty}k^2\norm{a_k}^2>0$. Then there exist $\varepsilon>0$ and $K\geq 1$ such that
	\[
	k^2\norm{a_k}^2\geq\varepsilon,\quad\forall k\geq K.
	\]
	Thus
	\[
	k\norm{a_k}^2\geq\frac{\varepsilon}{k},\quad\forall k\geq K,
	\]
	and therefore
	\[
	\sum_{k=K}^\infty k\norm{a_k}^2
	\geq\varepsilon\sum_{k=K}^\infty\frac{1}{k}
	=+\infty,
	\]
	which contradicts $\sum_{k=0}^\infty k\norm{a_{k+1}}^2<+\infty$. Hence
	\[
	\liminf_{k\to\infty}k^2\norm{a_k}^2=0.
	\]
\end{proof}
\begin{remark}
	The parameter $\alpha$ plays a crucial role in ensuring the conclusion of Corollary \ref{coro:1}. Indeed, if $\alpha=0$, the result
	\[
	\liminf_{k\to\infty}k^2\norm{a_k}^2=0
	\]
	may fail to hold, which can lead to slower convergence in practice (see Section \ref{sec:3}). Although this $\liminf$ result does not by itself imply the $o(1/k^2)$ convergence rate of $\norm{a_k}^2$, it is beneficial for the numerical performance of Algorithm \ref{al:sppa}. In Appendix \ref{sec:a}, we provide a sufficient condition under which the stronger conclusion
	\[
	\lim_{k\to\infty}k^2\norm{a_k}^2=0
	\]
	holds.
\end{remark}
\begin{theorem}
	Let $\{x_k\}$ and $\{a_k\}$ be the sequences generated by Algorithm \ref{al:sppa}. If $A$ is $\mu$-strongly monotone with $\mu>0$, $r>1$, $c\in(0,r-1)$, $\alpha\in[0,1)$, then there exists $\sigma>0$ such that\[
	\varphi_{k+1}\geq(1+\sigma)\varphi_k+1.
	\]
	Moreover, if $\alpha\in(0,1)$,  the convergence rate of Algorithm \ref{al:sppa} is\[
	\norm{a_{k+1}}^2\leq o\left(\frac{1}{\varphi_k^2}\right).
	\]
\end{theorem}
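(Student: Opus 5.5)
Strong monotonicity enters through $f_{k+1}(\varphi_k+1)$. I would show that it is bounded above by a negative multiple of $\varphi_k^2\norm{a_{k+1}}^2$. Then, since the leading coefficient $g^1_{k+1}$ is a fixed multiple of $\norm{a_{k+1}}^2$, the larger root must exceed $\varphi_k+1$ by a margin proportional to $\varphi_k$.

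\textbf{Step 1: a lower bound on the monotonicity term.} From the proof of Lemma~\ref{lem:proper},
\begin{align*}
f_{k+1}(\varphi_k+1)\le -\frac{(1-\alpha)[2(r-1-c)\varphi_k+2r^2-(r+1)(c+1)]}{2r^2}\norm{a_{k+1}}^2
-\frac{\varphi_k(\varphi_k+r)}{r^2}\inner{a_{k+1}-a_k,x_{k+1}-x_k}.
\end{align*}
For $k\ge1$, $\mu$-strong monotonicity gives $\inner{a_{k+1}-a_k,x_{k+1}-x_k}\ge\mu\norm{x_{k+1}-x_k}^2$. I would convert this into a bound in terms of $\norm{a_{k+1}}^2$ using the identity from the proof of Theorem~\ref{thm:lyapunovdiscrete},
\[
\varphi_k(x_{k+1}-x_k)=r(z_k-x_{k+1})-(\varphi_k+r)a_{k+1}.
\]
If this does not give a clean bound, I would instead combine the nonpositive term $-\tfrac{\varphi_k(\varphi_k+r)}{2r^2}\norm{a_{k+1}-a_k}^2$ with the monotonicity term. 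Since $A$ is maximally monotone and $\mu$-strongly monotone, $A^{-1}$ is $1/\mu$-Lipschitz, so $\norm{x_{k+1}-x_k}\le\mu^{-1}\norm{a_{k+1}-a_k}$. Hence
\[
\inner{a_{k+1}-a_k,x_{k+1}-x_k}\ge\mu\norm{x_{k+1}-x_k}^2\ge\mu^{3}\norm{x_{k+1}-x_k}^2\cdot\mu^{-2},
\]
which relates $x$- and $a$-differences. Whichever route works, the target is an estimate of the form
\[
f_{k+1}(\varphi_k+1)\le -\delta\,\varphi_k^2\norm{a_{k+1}}^2\quad\text{for some }\delta=\delta(\mu,r,c,\alpha)>0.
\]
The first term above already contributes a negative quantity linear in $\varphi_k$; the monotonicity term has to supply the quadratic order.

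\textbf{Step 2: the root lies well beyond $\varphi_k+1$.} Write $\varphi=\varphi_k+1+t$. Then
\[
f_{k+1}(\varphi)=f_{k+1}(\varphi_k+1)+f_{k+1}'(\varphi_k+1)\,t+g^1_{k+1}t^2.
\]
From the expanded form of $f_{k+1}$ in Lemma~\ref{lem:proper}, I would bound $f_{k+1}'(\varphi_k+1)$ above by $C\varphi_k\norm{a_{k+1}}^2+|\inner{a_{k+1},x_{k+1}-z_k}|$. The inner-product term is also $O(\varphi_k\norm{a_{k+1}}^2)$ after rewriting $x_{k+1}-z_k$ through the update identities. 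With $g^1_{k+1}\sim\norm{a_{k+1}}^2$, setting $t=\sigma\varphi_k$ with $\sigma$ small enough makes $f_{k+1}(\varphi_k+1+\sigma\varphi_k)<0$. The larger root then satisfies $\varphi_{k+1}\ge(1+\sigma)\varphi_k+1$. For $k=0$ this is trivial, since $\varphi_0=0$.

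\textbf{Step 3: the rate.} With $\alpha>0$, \eqref{eq:sumalpha} gives
\[
\sum_k(\varphi_{k+1}-\varphi_k)\varphi_k\norm{a_{k+1}}^2<\infty,
\]
and by Step 2, $(\varphi_{k+1}-\varphi_k)\varphi_k\ge\sigma\varphi_k^2$. Hence $\sum_k\varphi_k^2\norm{a_{k+1}}^2<\infty$, so $\varphi_k^2\norm{a_{k+1}}^2\to0$, which is the claimed $o(1/\varphi_k^2)$. Since $\varphi_k$ grows geometrically, this is exponential decay.

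\textbf{Main obstacle.} The hard part is Step 1: controlling $\inner{a_{k+1}-a_k,x_{k+1}-x_k}$ from below by a multiple of $\norm{a_{k+1}}^2$, uniformly in $k$. The risk is that $a_k$ and $a_{k+1}$ nearly cancel. I would try first to use the update identity to express $x_{k+1}-x_k$ in terms of $a_{k+1}$ and $z_k-x_{k+1}$. If uniform control fails, I would settle for a $\sigma$ defined through $\liminf$ behaviour, obtained by a contradiction argument with the Lyapunov bound \eqref{eq:lastiterate}.
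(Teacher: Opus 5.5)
Your Step~1 target, $f_{k+1}(\varphi_k+1)\le-\delta\varphi_k^2\norm{a_{k+1}}^2$ with $\delta$ independent of $k$, cannot be obtained from the available information, and Step~2 depends on it.

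In $f_{k+1}(\varphi_k+1)$ the only term that can be of order $\varphi_k^2$ is $-\frac{\varphi_k(\varphi_k+r)}{2r^2}\bigl[\norm{a_{k+1}-a_k}^2+2\inner{a_{k+1}-a_k,x_{k+1}-x_k}\bigr]$. Its bracket equals $\norm{\tilde{x}_{k+1}-\tilde{x}_k}^2-\norm{x_{k+1}-x_k}^2$. Strong monotonicity bounds this below by $(2\mu+\mu^2)\norm{x_{k+1}-x_k}^2$, never by a multiple of $\norm{a_{k+1}}^2$. Nothing in the iteration rules out $r(z_k-x_k)=(\varphi_k+r)a_k$, i.e.\ $\tilde{x}_{k+1}=\tilde{x}_k$. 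In that case $x_{k+1}=x_k$ and $a_{k+1}=a_k\neq0$, the bracket vanishes, and $f_{k+1}(\varphi_k+1)$ is only of order $-\varphi_k\norm{a_{k+1}}^2$.

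Your Lipschitz alternative points the wrong way. The bound $\norm{x_{k+1}-x_k}\le\mu^{-1}\norm{a_{k+1}-a_k}$ controls $\norm{x_{k+1}-x_k}$ from above, and the displayed chain is an identity.

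Step~2 has the mirror-image defect. Since $x_{k+1}-z_k=-\frac{\varphi_k}{r}(x_{k+1}-x_k)-\frac{\varphi_k+r}{r}a_{k+1}$, the term $\inner{a_{k+1},x_{k+1}-z_k}$ contains $-\frac{\varphi_k}{r}\inner{a_{k+1},x_{k+1}-x_k}$, which is not $O(\varphi_k\norm{a_{k+1}}^2)$. Taking absolute values also discards the sign of the $-\frac{\varphi_k+r}{r}\norm{a_{k+1}}^2$ part, and that sign is what drives the result.

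The missing idea is that the order-$\varphi_k^2$ negativity comes from the slope of $f_{k+1}$ at $\varphi_k+1$, not from its value there. Differentiating $f_{k+1}$ and substituting the identity above gives exactly
\[
f_{k+1}'(\varphi_k+1)=\frac{-2(1-\alpha)(r-1-c)\varphi_k+r+2+2(r+1)c-2r^2}{2r^2}\norm{a_{k+1}}^2-\frac{\varphi_k}{r}\inner{a_{k+1},x_{k+1}-x_k}.
\]
Now use the exact expansion $f_{k+1}((1+\sigma)\varphi_k+1)=f_{k+1}(\varphi_k+1)+\sigma\varphi_k f_{k+1}'(\varphi_k+1)+g^1_{k+1}\sigma^2\varphi_k^2$. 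The middle term supplies $-\frac{(1-\alpha)(r-1-c)\sigma}{r^2}\varphi_k^2\norm{a_{k+1}}^2$. Its $O(\sigma\varphi_k)\norm{a_{k+1}}^2$ remainder is dominated by the $-\frac{(1-\alpha)(r-1-c)}{r^2}\varphi_k\norm{a_{k+1}}^2$ already present in $f_{k+1}(\varphi_k+1)$.

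Strong monotonicity is needed only to absorb the cross term $-\frac{\sigma\varphi_k^2}{r}\inner{a_{k+1},x_{k+1}-x_k}$. It goes into the $-\frac{(2\mu+\mu^2)\varphi_k(\varphi_k+r)}{2r^2}\norm{x_{k+1}-x_k}^2$ that $f_{k+1}(\varphi_k+1)$ provides. Young's inequality with weight $r\sigma/(2\mu+\mu^2)$ does this at a cost of $\frac{\sigma^2\varphi_k^2}{2(2\mu+\mu^2)}\norm{a_{k+1}}^2$. Like the $g^1_{k+1}$ term, this is second order in $\sigma$, so $f_{k+1}((1+\sigma)\varphi_k+1)<0$ for small $\sigma$.

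The paper uses the same mechanism. It evaluates $f_{k+1}$ at $(1+\sigma)\varphi_k+1$ and eliminates $(\varphi_{k+1}-\varphi_k-1)\inner{a_{k+1},x_{k+1}-z_k}$ by adding and subtracting $\frac{(\varphi_{k+1}-\varphi_k-1)\varphi_k^2}{2r(\varphi_k+r)}\norm{x_{k+1}-x_k}^2$. It then dominates that term via $\inner{a_{k+1}-a_k,x_{k+1}-x_k}\ge\mu\norm{x_{k+1}-x_k}^2$ and $\norm{a_{k+1}-a_k}^2\ge\mu^2\norm{x_{k+1}-x_k}^2$ for $\sigma\le(2\mu+\mu^2)/r$.

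Be aware that this equal-weight completion of the square loses $\frac{\sigma\varphi_k(\varphi_k+r)}{2r}\norm{a_{k+1}}^2$ relative to the accounting above, which is first order in $\sigma$. The printed $h_1$ should therefore read $\frac{[2(1-\alpha)(r-1-c)-r]\sigma-[1+(r+1)c]\sigma^2}{2r^2}$. This is negative for every $\sigma>0$ when, e.g., $c=(r-1)/2$, so the dropped $\norm{z_k-x_{k+1}}^2$ and surplus $\norm{x_{k+1}-x_k}^2$ terms cannot simply be discarded. The $\sigma$-weighted Young bound above avoids this.

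Your Step~3 is correct and is exactly the paper's argument.
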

\begin{proof}
	\begin{align*}
		&\ f_{k+1}(\varphi_{k+1})\\
		=&\  f_{k+1}(\varphi_{k+1})-\frac{(\varphi_{k+1}-\varphi_k-1)\varphi_k^2}{2r(\varphi_k+r)}\norm{x_{k+1}-x_k}^2+\frac{(\varphi_{k+1}-\varphi_k-1)\varphi_k^2}{2r(\varphi_k+r)}\norm{x_{k+1}-x_k}^2\\
		=&\ f_{k+1}(\varphi_{k+1})-\frac{\varphi_{k+1}-\varphi_k-1}{2r(\varphi_k+r)}\norm{r(z_k-x_{k+1})-(\varphi_k+r)a_{k+1}}^2\\
		&\ +\frac{(\varphi_{k+1}-\varphi_k-1)\varphi_k^2}{2r(\varphi_k+r)}\norm{x_{k+1}-x_k}^2\\
		=&\ \left(\frac{\varphi_{k+1}(\varphi_{k+1}+r)}{2r^2}+\frac{\varphi_{k+1}(\varphi_{k+1}-\varphi_k)c}{r^2}+\frac{(r-1)(\varphi_{k+1}-\varphi_k)^2c}{2r^2}-\frac{(\varphi_k+2r)(\varphi_k+r)}{2r^2}\right)\norm{a_{k+1}}^2 \\
		&\ -\frac{r(\varphi_{k+1}-\varphi_k-1)}{2(\varphi_k+r)}\norm{z_k-x_{k+1}}^2-\frac{(\varphi_{k+1}-\varphi_k-1)(\varphi_k+r)}{2r}\norm{a_{k+1}}^2\\
		&\ -\frac{\varphi_k(\varphi_k+r)}{r^2}\inner{a_{k+1}-a_k,x_{k+1}-x_k}-\frac{\varphi_k(\varphi_k+r)}{2r^2}\norm{a_{k+1}-a_k}^2\\
		&\ +\frac{(\varphi_{k+1}-\varphi_k-1)\varphi_k^2}{2r(\varphi_k+r)}\norm{x_{k+1}-x_k}^2\\
		&\ +\frac{\alpha[2(r-1-c)(\varphi_{k+1}-\varphi_k)\varphi_k+2r^2-(r+1)(c+1)]}{2r^2}\norm{a_{k+1}}^2\\
		\leq&\ \left(\frac{\varphi_{k+1}(\varphi_{k+1}+r)}{2r^2}+\frac{\varphi_{k+1}(\varphi_{k+1}-\varphi_k)c}{r^2}+\frac{(r-1)(\varphi_{k+1}-\varphi_k)^2c}{2r^2}-\frac{(\varphi_k+2r)(\varphi_k+r)}{2r^2}\right)\norm{a_{k+1}}^2 \\
		&\ -\frac{r(\varphi_{k+1}-\varphi_k-1)}{2(\varphi_k+r)}\norm{z_k-x_{k+1}}^2-\frac{(\varphi_{k+1}-\varphi_k-1)(\varphi_k+r)}{2r}\norm{a_{k+1}}^2\\
		&\ -\left(\frac{\varphi_k(\varphi_k+r)\mu}{r^2}+\frac{\varphi_k(\varphi_k+r)\mu^2}{2r^2}-\frac{(\varphi_{k+1}-\varphi_k-1)\varphi_k^2}{2r(\varphi_k+r)}\right)\norm{x_{k+1}-x_k}^2\\
		&\ +\frac{\alpha[2(r-1-c)(\varphi_{k+1}-\varphi_k)\varphi_k+2r^2-(r+1)(c+1)]}{2r^2}\norm{a_{k+1}}^2.
	\end{align*}
	
	Let $\varphi_{k+1}=(1+\sigma)\varphi_k+1$, where $\sigma>0$. We have
	\begin{align*}
		&\ f_{k+1}(\varphi_{k+1})\\
		=&\ -\left(h_1(\sigma)\varphi_k^2+h_2(\sigma)\varphi_k+\frac{(1-\alpha)[2r^2-(r+1)(c+1)]}{2r^2}\right)\norm{a_{k+1}}^2-\frac{\sigma r\varphi_k}{2(\varphi_k+r)}\norm{z_k-x_{k+1}}^2\\
		&\ -\left(\frac{\varphi_k(\varphi_k+r)\mu}{r^2}+\frac{\varphi_k(\varphi_k+r)\mu^2}{2r^2}-\frac{\sigma\varphi_k^3}{2r(\varphi_k+r)}\right)\norm{x_{k+1}-x_k}^2,
	\end{align*}
	where\begin{align*}
		h_1(\sigma) &= \frac{2(1-\alpha)(r-1-c)\sigma-[1+(r+1)c]\sigma^2}{2r^2}, \\
		h_2(\sigma) &= \frac{2(1-\alpha)(r-1-c)+[r^2-r-2-2(r+1)c]\sigma}{2r^2}.
	\end{align*}
	First, because of the inequality $\frac{\varphi_k^3}{\varphi_k+r}<\varphi_k(\varphi_k+r)$, then for all $\sigma\in\left(0,\frac{2\mu+\mu^2}{r}\right)$, we have\[
	-\left(\frac{\varphi_k(\varphi_k+r)\mu}{r^2}+\frac{\varphi_k(\varphi_k+r)\mu^2}{2r^2}-\frac{\sigma\varphi_k^3}{2r(\varphi_k+r)}\right)\norm{x_{k+1}-x_k}^2\leq 0.
	\]
	Second, if $\sigma\in\left(0,\frac{2(1-\alpha)(r-1-c)}{1+(r+1)c}\right]$, $h_1(\sigma)\geq 0$. Finally, if $r^2-r-2(r+1)c<0$, then for all $\sigma\in\left(0,-\frac{2(1-\alpha)(r-1-c)}{r^2-r-2-2(r+1)c}\right]$, $h_2(\sigma)\geq 0$. If $r^2-r-2(r+1)c\geq 0$, $h_2(\sigma)\geq 0$ holds for all $\sigma>0$. 
	
	In conclusion, if $r^2-r-2(r+1)c<0$, then for all \[
	\sigma\leq\min\left\{\frac{2\mu+\mu^2}{r},\frac{2(1-\alpha)(r-1-c)}{1+(r+1)c},-\frac{2(1-\alpha)(r-1-c)}{r^2-r-2-2(r+1)c}\right\},
	\]
	we have $f_{k+1}(\varphi_{k+1})\leq 0$. If $r^2-r-2(r+1)c\geq 0$, then for all \[
	\sigma\leq\min\left\{\frac{2\mu+\mu^2}{r},\frac{2(1-\alpha)(r-1-c)}{1+(r+1)c}\right\},
	\]
	we have $f_{k+1}(\varphi_{k+1})\leq 0$.
	
	If $\alpha\in(0,1)$, then according to \eqref{eq:sumalpha} and $\varphi_{k+1}\geq(1+\sigma)\varphi_k+1$, we have \[
	\sum_{k=1}^\infty\varphi_k^2\norm{a_{k+1}}^2<+\infty.
	\]
	Thus\[
	\lim_{k\to\infty}\varphi_k^2\norm{a_{k+1}}^2=0.
	\]
\end{proof}
\section{Numerical Experiments}
\label{sec:3}
\subsection{Strongly Monotone Case}

In this section, we consider the following maximally monotone operator:
\[
A(x)=\mu(MM^\top+\mathrm{Id})x,
\]
where $\mu>0$, $M\in\mathbb{R}^{2n\times n}$, and $\mathrm{Id}$ is the $2n\times 2n$ identity matrix. It is easy to see that $A$ is maximally monotone and $\mu$-strongly monotone, and that $\zero(A)=\{0\}$. Since $A$ is $\mu$-strongly monotone, the operator
\begin{equation}
	T:=\left(1+\frac{1}{1+2\mu}\right)J_A-\frac{1}{1+2\mu}\mathrm{Id},
	\label{eq:contraction}
\end{equation}
where $I$ is the identity operator, is a contraction with modulus $\frac{1}{1+2\mu}$, and the set of fixed points of $T$ coincides with $\zero(A)$ (see Lemma 2.1 in \cite{park22}).

In our experiment, we set $n=500$, $\mu=0.01$, and generate $M$ randomly. Six algorithms are tested: PPA, SPPA (with $r=20$, $c=(r-1)/2$), ASPPA (Algorithm \ref{al:sppa}, with $r=20$, $c=(r-1)/2$, $\alpha=0$), the Banach--Picard iteration with \eqref{eq:contraction} (labeled as BP), OS-PPM in \cite{park22}, and the adaptive Halpern iteration with \eqref{eq:contraction} (labeled as AH). The Fast-KM iteration is not considered in this section because its numerical performance is similar to that of SPPA (see \cite{yuan2026}). All algorithms start from the same randomly generated initial point. We run these algorithms for 10000 iterations and plot $\norm{x_k}$ and $\norm{a_k}$ against the iteration count in Figure \ref{fig:1}.

\begin{figure}[htbp]
	\centering
	\begin{subfigure}{0.45\linewidth}
		\includegraphics[width=\linewidth]{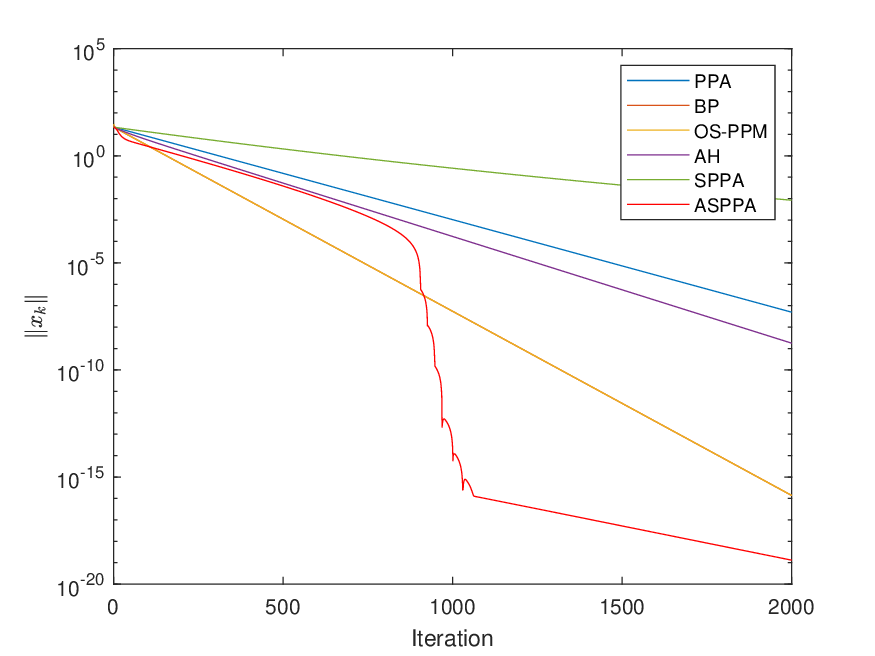}
	\end{subfigure}
	\begin{subfigure}{0.45\linewidth}
		\includegraphics[width=\linewidth]{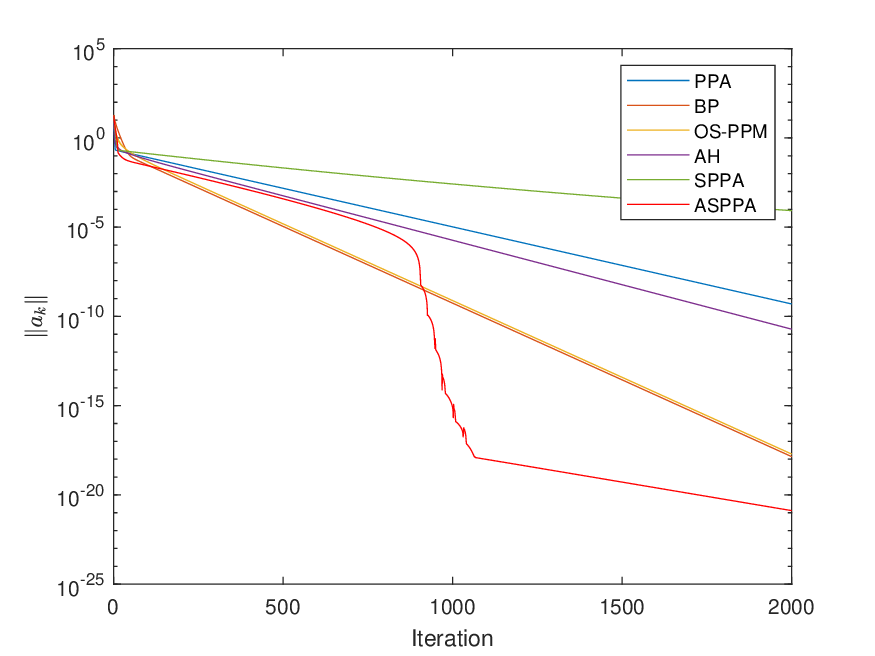}
	\end{subfigure}
	\begin{subfigure}{0.45\linewidth}
		\includegraphics[width=\linewidth]{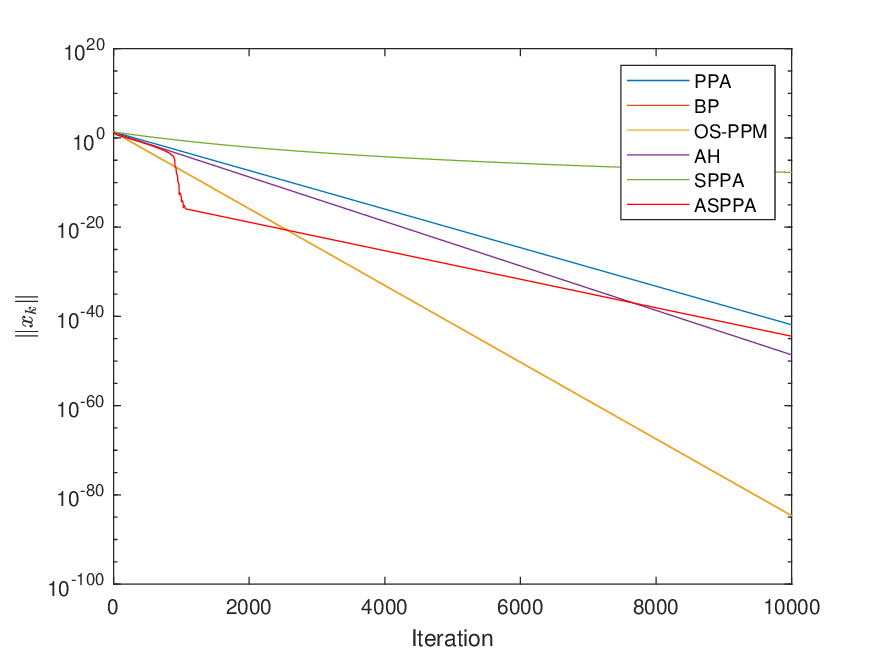}
	\end{subfigure}
	\begin{subfigure}{0.45\linewidth}
		\includegraphics[width=\linewidth]{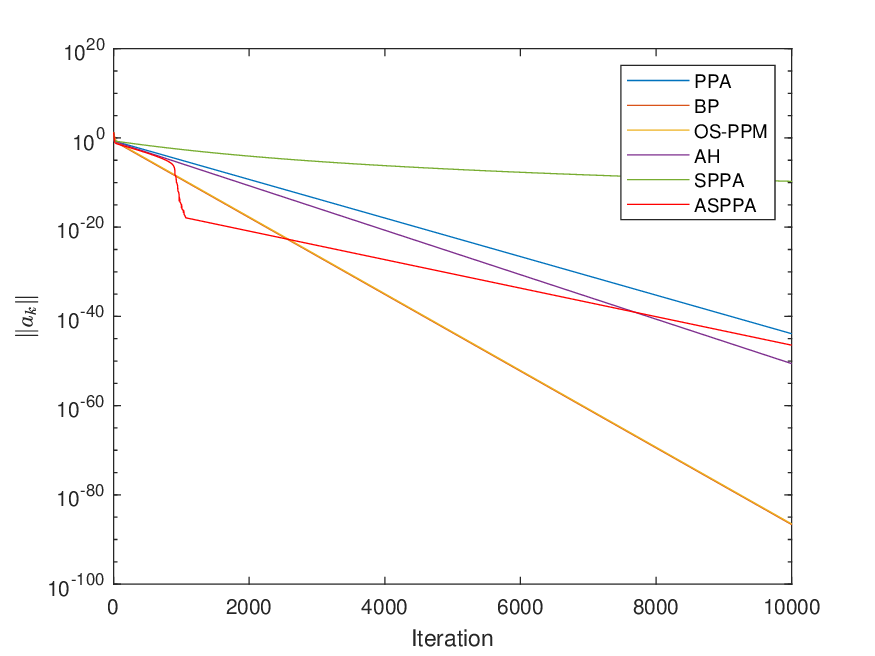}
	\end{subfigure}
	\caption{Comparison between PPA and its five extensions. The two figures in the first row show the convergence curves of $\norm{x_k}$ and $\norm{a_k}$ over the first 2000 iterations. The two figures in the second row show the convergence curves of $\norm{x_k}$ and $\norm{a_k}$ over the entire 10000 iterations. The curves of OS-PPM and BP are very close.}
	\label{fig:1}
\end{figure}

As shown in Figure \ref{fig:1}, ASPPA converges faster in the initial iterations. Moreover, OS-PPM and the Banach--Picard iteration with \eqref{eq:contraction} converge faster when $\norm{x_k}<10^{-20}$. In conclusion, OS-PPM and the Banach--Picard iteration have better asymptotic convergence rates, while ASPPA exhibits faster initial convergence.

\subsection{LASSO Problem}

In this section, we consider the LASSO problem:
\begin{equation}
	\min_u \frac{1}{2}\norm{Mu-b}^2+\lambda\norm{u}_1,
	\label{eq:lasso}
\end{equation}
where $M\in\mathbb{R}^{m\times n}$, $b\in\mathbb{R}^m$, and $\lambda>0$. Problem \eqref{eq:lasso} is equivalent to
\begin{equation}
	\min_{u,v}\frac{1}{2}\norm{Mu-b}^2+\lambda\norm{v}_1,\quad \text{s.t. } u=v.
	\label{eq:lasso2}
\end{equation}
ADMM is an effective method for solving \eqref{eq:lasso2} and is an instance of PPA \cite{eckstein92}. It has been proven that the convergence rate of ADMM for solving \eqref{eq:lasso2} is linear \cite{hong2017}.

In our experiment, the matrix $M$ and vector $b$ are randomly generated, and $\lambda=\lambda_c\norm{M^\top b}_\infty$ with $\lambda_c\in(0,1)$; the choice of $\lambda$ follows \cite{li18}. We test ADMM (with penalty parameter 1) and three accelerated ADMM variants: AH-ADMM derived from the adaptive Halpern iteration, S-ADMM derived from SPPA with $r=20$, $c=(r-1)/2$, and AS-ADMM derived from Algorithm \ref{al:sppa} with $r=20$, $c=(r-1)/2$, $\alpha=0$. We run these four algorithms for 100000 iterations and plot the curves of $\norm{x_k-y_k}$ and $f(x_k)-f^*$, where $f^*$ is the optimal value of \eqref{eq:lasso} computed by a high-accuracy solver, versus the iteration count in Figure \ref{fig:2}. As shown in Figure \ref{fig:2}, AS-ADMM converges fastest.

\begin{figure}[ht]
	\centering
	\begin{subfigure}{0.45\linewidth}
		\includegraphics[width=\linewidth]{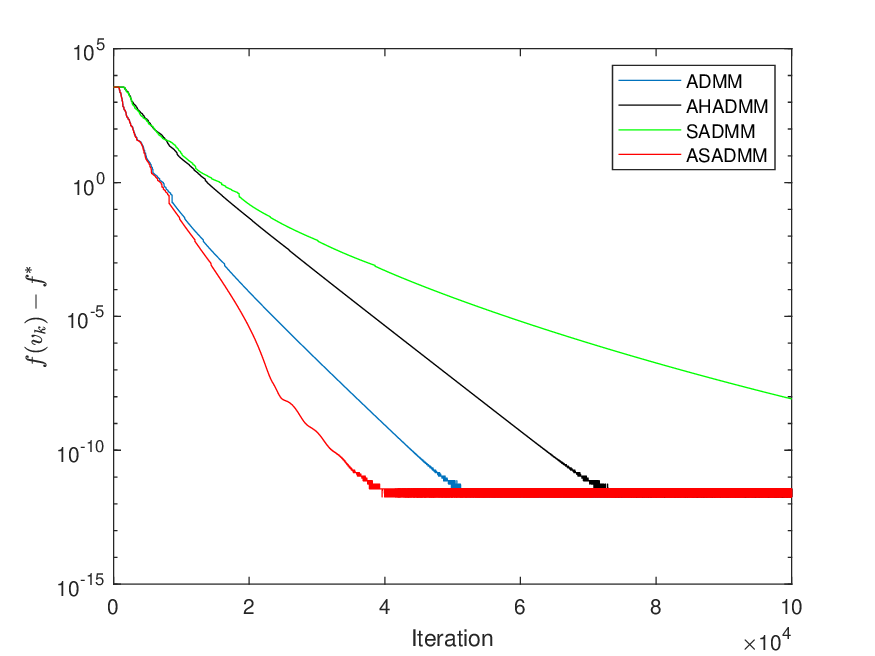}
	\end{subfigure}
	\begin{subfigure}{0.45\linewidth}
		\includegraphics[width=\linewidth]{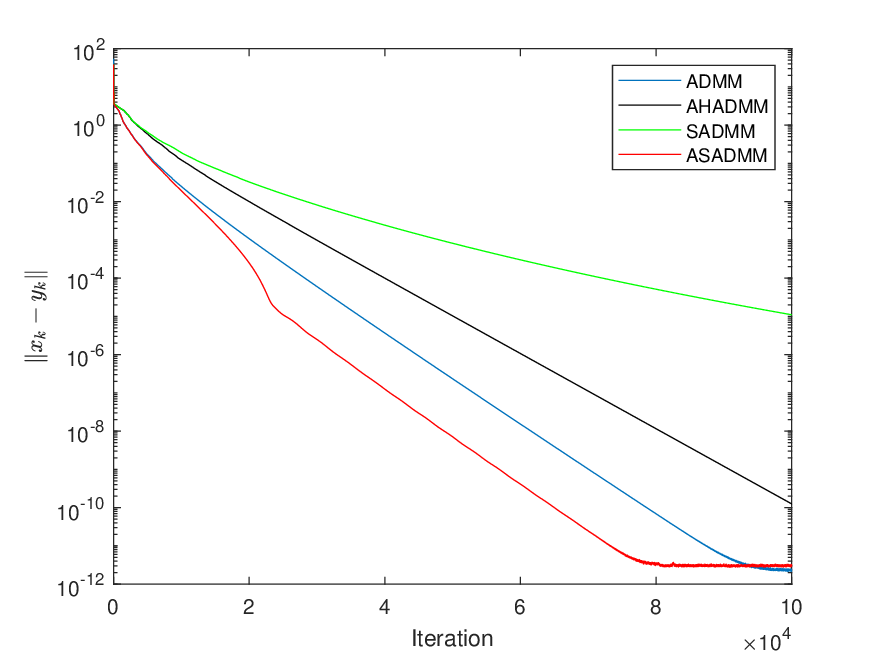}
	\end{subfigure}
	\caption{Comparison between ADMM and its three accelerated variants for solving \eqref{eq:lasso2}.}
	\label{fig:2}
\end{figure}

As shown in Figure \ref{fig:2}, ADMM exhibits a linear convergence rate and converges faster than AH-ADMM and S-ADMM. However, AS-ADMM converges even faster than ADMM.

Next, we test another effective algorithm for solving the LASSO problem. By invoking the Fenchel conjugate function, \eqref{eq:lasso} is equivalent to
\begin{equation}
	\min_{u\in\mathbb{R}^n}\max_{v\in\mathbb{R}^m} \lambda\norm{u}_1+\inner{Mu,v}-\frac{1}{2}\norm{v}^2-\inner{b,v}.
	\label{eq:lasso3}
\end{equation}
PDHG can be used to solve \eqref{eq:lasso3} and is an instance of PPA \cite{chambolle16,connor20}. Similarly, in our experiment, $M$ and $b$ are randomly generated, and $\lambda=\lambda_c\norm{M^\top b}_\infty$ with $\lambda_c\in(0,1)$; the choice of $\lambda$ follows \cite{li18}. We test PDHG and three accelerated PDHG variants: AH-PDHG derived from the adaptive Halpern iteration, S-PDHG derived from SPPA with $r=20$, $c=(r-1)/2$, and AS-PDHG derived from Algorithm \ref{al:sppa} with $r=20$, $c=(r-1)/2$, $\alpha=0$. We run these four algorithms for 100000 iterations and plot the curves of $\norm{x_k-y_k}$ and $f(x_k)-f^*$, where $f^*$ is the optimal value of \eqref{eq:lasso} computed by a high-accuracy solver, versus the iteration count in Figure \ref{fig:3}. As shown in Figure \ref{fig:3}, AS-PDHG converges fastest.

\begin{figure}[ht]
	\centering
	\begin{subfigure}{0.45\linewidth}
		\includegraphics[width=\linewidth]{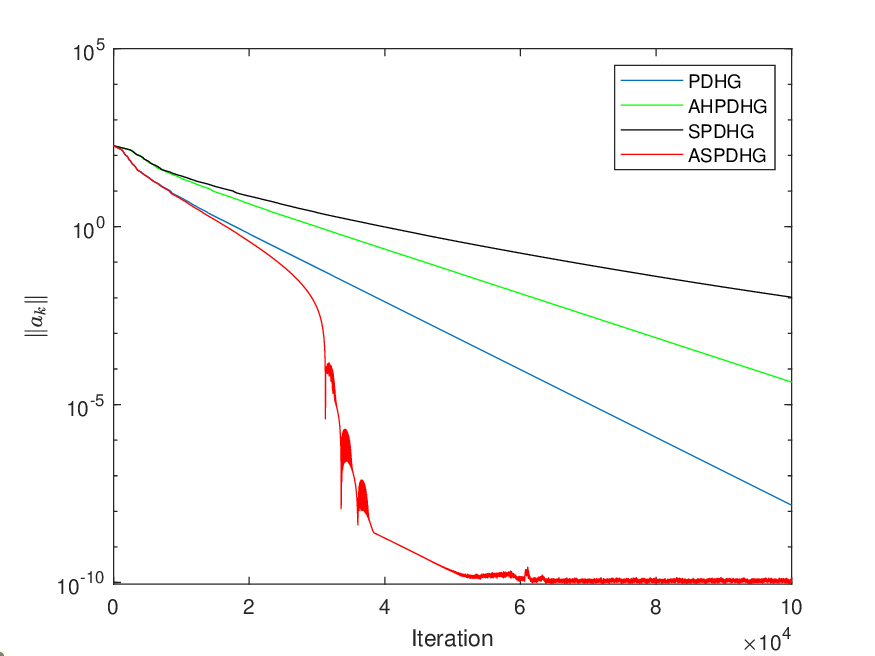}
	\end{subfigure}
	\begin{subfigure}{0.45\linewidth}
		\includegraphics[width=\linewidth]{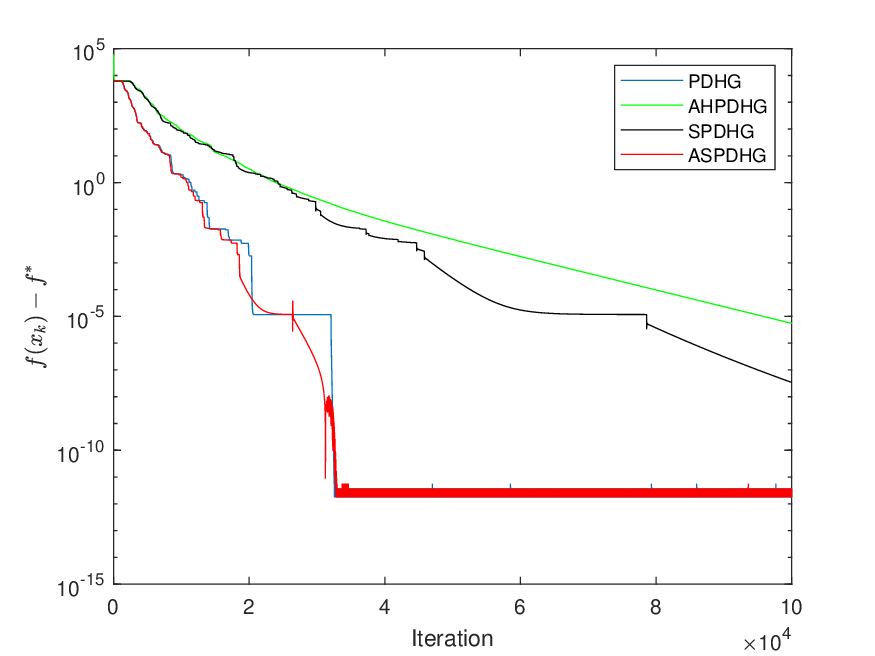}
	\end{subfigure}
	\caption{Comparison between PDHG and its three accelerated variants for solving \eqref{eq:lasso3}.}
	\label{fig:3}
\end{figure}

As shown in Figure \ref{fig:3}, PDHG and AS-PDHG converge faster than AH-PDHG and S-PDHG. However, PDHG and AS-PDHG have similar performance in decreasing the function value. A possible reason is that Algorithm \ref{al:sppa} has better performance in decreasing $\norm{a_k}$ (see the left subfigure in Figure \ref{fig:3}).

\subsection{Basis Pursuit}

In the previous section, we mainly tested Algorithm \ref{al:sppa} with $\alpha=0$ and found that it exhibits better numerical performance. This raises the question of whether $\alpha=0$ yields the fastest convergence rate for Algorithm \ref{al:sppa}. Here, we present an experiment showing that $\alpha=0$ is not always the best choice.

We consider the basis pursuit problem
\begin{equation}
	\min_u \norm{u}_1,\quad \text{s.t. } Mu=b,
\end{equation}
where $M\in\mathbb{R}^{m\times n}$ and $b\in\mathbb{R}^m$. According to the Lagrangian duality theorem, it is equivalent to solving
\begin{equation}
	\min_{u\in\mathbb{R}^n}\max_{v\in\mathbb{R}^m}\norm{u}_1+\inner{v,Mu-b}.
	\label{eq:basispursuit}
\end{equation}

In our experiment, $M$ and $b$ are randomly generated. We test ADMM (with penalty parameter 1) and three accelerated ADMM variants: AH-ADMM derived from the adaptive Halpern iteration, S-ADMM derived from SPPA with $r=20$, $c=(r-1)/2$. We also test two instances of accelerated PDHG derived from Algorithm \ref{al:sppa} with $r=20$, $c=(r-1)/2$, and $\alpha=0$ and $\alpha=0.99$, respectively, labeled as AS-PDHG ($\alpha=0$) and AS-PDHG ($\alpha=0.99$).

\begin{figure}[ht]
	\centering
	\includegraphics[width=.6\linewidth]{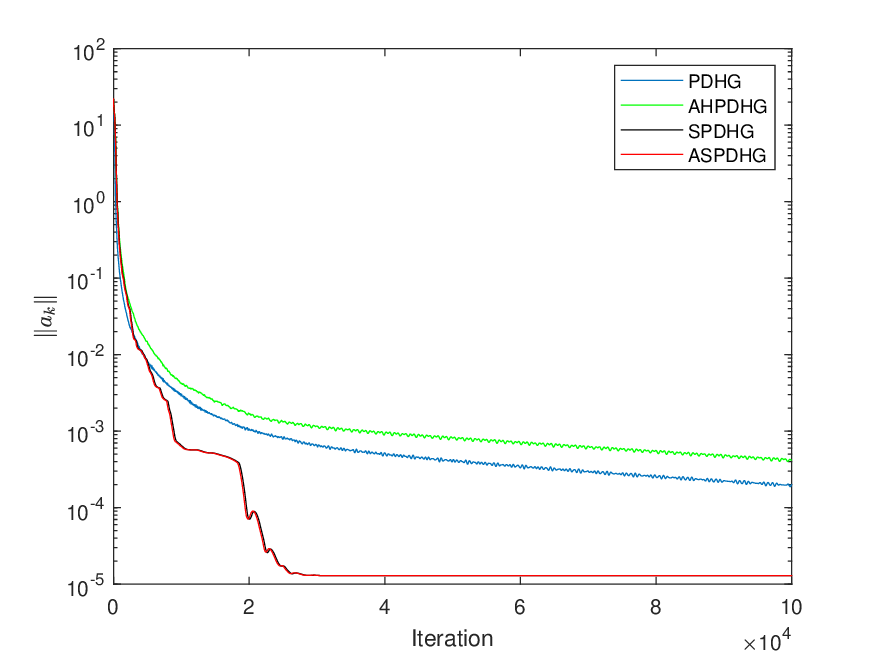}
	\caption{Comparison between PDHG and its three accelerated variants.}
	\label{fig:4}
\end{figure}

As shown in Figure \ref{fig:4}, S-PDHG and AS-PDHG converge faster than AH-PDHG and PDHG. Moreover, S-PDHG and AS-PDHG have similar numerical performance, with AS-PDHG slightly better than S-PDHG. Next, we compare the numerical performance of AS-PDHG with different choices of $\alpha$. The setting is the same as in the previous experiment, with $r=20$ and $c=(r-1)/2$. We run AS-PDHG with $\alpha=0, 0.25, 0.5, 0.75, 0.99$ and plot $\norm{a_k}$ against the iteration count in Figure \ref{fig:5}.

\begin{figure}[ht]
	\centering
	\includegraphics[width=.6\linewidth]{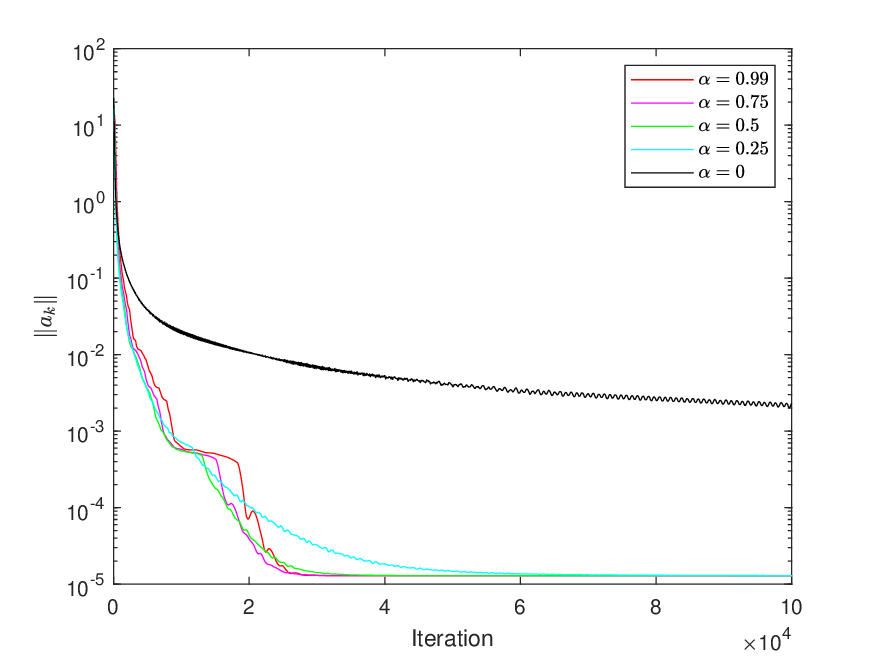}
	\caption{Comparison between AS-PDHG with different $\alpha$ for solving \eqref{eq:basispursuit}.}
	\label{fig:5}
\end{figure}

As shown in Figure \ref{fig:5}, when $\alpha=0$, AS-PDHG converges most slowly. The variants with $\alpha=0.25, 0.5, 0.75, 0.99$ have similar numerical performance. A possible reason is that AS-PDHG itself converges faster than PDHG for solving \eqref{eq:basispursuit}, which implies that the $o(1/k^2)$ convergence rate of AS-PDHG makes it converge faster. Thus, to obtain a numerical convergence rate similar to that of AS-PDHG, $\alpha>0$ is required to guarantee that AS-PDHG satisfies $\liminf_{k\to\infty}k^2\norm{a_k}^2=0$, so that the resulting algorithm converges faster.
\section{Conclusions}
In this paper, we proposed an adaptive symplectic proximal point algorithm (ASPPA) for solving monotone inclusion problems. The algorithm adaptively selects the parameter $\varphi_{k+1}$ by solving a quadratic equation. We proved that the squared norm term satisfies $\norm{a_k}^2=O(1/k^2)$ for maximally monotone operators. When $\alpha\in(0,1)$, we further showed that $\liminf_{k\to\infty}k^2\norm{a_k}^2=0$, and under an additional summability condition, we obtained the stronger $o(1/k^2)$ convergence rate. For strongly monotone operators, we established a linear (exponential) convergence rate without requiring prior knowledge of the strong monotonicity parameter $\mu$. Numerical experiments show that ASPPA exhibits better numerical performance than several existing accelerated methods.
\section*{Acknowledgments}

The author is deeply grateful to Professor Ya-xiang Yuan for his continuous guidance and encouragement during the author's doctoral studies. This work is based on the author's doctoral research supervised by Professor Yuan. The author also sincerely thanks him for his careful reading of the present paper and for his valuable comments and suggestions. The author used DeepSeek, a large language model, for language polishing and editing of this manuscript.

\appendix
\section{Sufficient Conditions for the $o(1/k^2)$ Convergence Rate}
\label{sec:a}

We first show that the condition
\begin{equation}
	\sum_{k=0}^\infty(\varphi_{k+1}-\varphi_k-1)^2\varphi_k\norm{a_{k+1}}^2<+\infty
	\label{eq:sumbeta}
\end{equation}
is sufficient for the convergence rate of Algorithm \ref{al:sppa} to be $o(1/k^2)$ and for the sequence $\{x_k\}$ generated by Algorithm \ref{al:sppa} to converge weakly to a point in $\zero(A)$. We first present some auxiliary results.

\begin{proposition}[Cauchy Condensation Test]
	Given a nonnegative sequence $\{a_k\}$, $\sum_{k=0}^\infty a_k<+\infty$ if and only if $\sum_{k=0}^\infty 2^ka_{2^k}<+\infty$.
\end{proposition}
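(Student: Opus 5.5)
The statement is the Cauchy condensation test for a nonnegative sequence, and I will prove it in the classical way. Note that the test genuinely needs monotonicity, i.e. $\{a_k\}$ nonincreasing. Without it, take $a_k=0$ at powers of two and $a_k=1$ otherwise; the condensed series converges while the original diverges. So I will read the proposition as including the standing assumption that $a_k$ is nonnegative and nonincreasing, which is how the paper presumably applies it (to something like $\varphi_k^2\norm{a_k}^2$, or a related monotone quantity). The index $k=0$ is harmless, since $a_0$ is a single finite term, so I will work with $\sum_{k\ge1}a_k$.

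The plan is to group the terms dyadically and compare block sums with the condensed terms. For each $j\ge0$ let $B_j=\{2^j,\dots,2^{j+1}-1\}$, which contains $2^j$ indices. By monotonicity every term in $B_j$ lies between $a_{2^{j+1}}$ and $a_{2^j}$, so
\[
2^j a_{2^{j+1}}\le\sum_{k\in B_j}a_k\le 2^j a_{2^j}.
\]

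For the direction "condensed series converges implies original converges", I sum the upper bound over $j=0,\dots,J$. This gives $\sum_{k=1}^{2^{J+1}-1}a_k\le\sum_{j=0}^J 2^ja_{2^j}$. The partial sums of a nonnegative series are increasing, so bounded partial sums along the subsequence $N=2^{J+1}-1$ yield convergence.

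For the converse, I sum the lower bound. This gives $\sum_{j=0}^J 2^j a_{2^{j+1}}=\tfrac12\sum_{j=1}^{J+1}2^{j}a_{2^{j}}\le\sum_{k\ge1}a_k$. Hence the condensed partial sums are bounded by $2\sum_k a_k+a_1$ and so converge.

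There is no real obstacle here. The only delicate point is recognizing that the nonincreasing hypothesis is required and invoking it explicitly in the block bounds.
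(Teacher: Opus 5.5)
Your argument is correct. It is the standard dyadic-block proof, and the paper gives no proof of its own: it quotes the test as a classical fact, so there is no alternative route to compare.

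You are also right that the statement as printed omits the hypothesis that $\{a_k\}$ is nonincreasing, and that it is false without it. The paper uses the test only once, in Case 1 of Theorem \ref{thm:pointconvergence}. There it is applied to $a_k=1/\varphi_k$, not to $\varphi_k^2\norm{a_k}^2$ as you guessed. That sequence is nonincreasing because $\varphi_{k+1}\geq\varphi_k+1$, so the hypothesis you restore does hold where the paper needs it.
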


\begin{proposition}[Lemma 5.31 in \cite{bauschke17}]
	\label{prop:sumprop}
	Let $\{a_k\}_{k\geq 1}$, $\{b_k\}_{k\geq 1}$, and $\{d_k\}_{k\geq 1}$ be sequences of real numbers. Assume that $\{a_k\}_{k\geq 1}$ is bounded from below, and that $\{b_k\}_{k\geq 1}$ and $\{d_k\}_{k\geq 1}$ are nonnegative sequences such that $\sum_{k=1}^\infty d_k<+\infty$. If
	\[
	a_{k+1}\leq a_k-b_k+d_k,\quad\forall k\geq 1,
	\]
	then the following statements hold:
	\begin{itemize}
		\item the sequence $\{b_k\}_{k\geq 1}$ is summable, i.e., $\sum_{k=1}^\infty b_k<+\infty$;
		\item the sequence $\{a_k\}_{k\geq 1}$ is convergent.
	\end{itemize}
\end{proposition}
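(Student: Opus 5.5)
The plan is to telescope for the summability claim and then use a ``tail-corrected'' auxiliary sequence that is monotone, so that convergence of $\{a_k\}$ follows from the monotone convergence theorem. Throughout, let $m\in\mathbb{R}$ be a lower bound for $\{a_k\}_{k\geq 1}$, and let $D:=\sum_{k=1}^\infty d_k<+\infty$.

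\emph{Summability of $\{b_k\}$.} Sum the inequality $a_{k+1}\leq a_k-b_k+d_k$ over $k=1,\dots,N$. The $a$-terms telescope and give
\[
a_{N+1}+\sum_{k=1}^N b_k\leq a_1+\sum_{k=1}^N d_k\leq a_1+D .
\]
Since $a_{N+1}\geq m$, this yields $\sum_{k=1}^N b_k\leq a_1-m+D$ for every $N$. The partial sums of the nonnegative sequence $\{b_k\}$ are therefore bounded, so $\sum_{k=1}^\infty b_k<+\infty$.

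\emph{Convergence of $\{a_k\}$.} Define $c_k:=a_k+\sum_{j\geq k}d_j$, which is finite because $\{d_k\}$ is summable. Using the hypothesis,
\[
c_{k+1}=a_{k+1}+\sum_{j\geq k+1}d_j\leq a_k-b_k+d_k+\sum_{j\geq k+1}d_j=c_k-b_k\leq c_k ,
\]
so $\{c_k\}$ is non-increasing. Because $d_j\geq 0$, we also have $c_k\geq a_k\geq m$, so $\{c_k\}$ is bounded below. Hence $\{c_k\}$ converges to some limit $\ell$. The tail $\sum_{j\geq k}d_j$ tends to $0$ as $k\to\infty$, since it is the remainder of a convergent series. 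Therefore $a_k=c_k-\sum_{j\geq k}d_j\to\ell$, and $\{a_k\}$ is convergent.

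The only non-routine step is choosing the auxiliary sequence $c_k$. Absorbing the remaining perturbation mass $\sum_{j\geq k}d_j$ into $a_k$ is exactly what makes the recursion monotone. Everything else is a direct telescoping and limit argument.
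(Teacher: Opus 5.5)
Your proof is correct and complete. Telescoping gives $\sum_{k=1}^N b_k\le a_1-m+\sum_{k=1}^\infty d_k$, which yields summability, and the tail-corrected sequence $a_k+\sum_{j\ge k}d_j$ is non-increasing and bounded below by $m$, which yields convergence of $\{a_k\}$. The paper gives no proof of its own and only cites Lemma 5.31 of Bauschke and Combettes, so there is no competing argument to compare with; yours is a standard elementary proof, and it uses only the lower bound on $\{a_k\}$, not nonnegativity.
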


\begin{proposition}[Opial's Lemma in \cite{opial67}, Lemma 2.47 in \cite{bauschke17}]
	\label{prop:discreteopial}
	Let $\{x_k\}$ be a sequence in $\mathcal{H}$ and let $S$ be a nonempty subset of $\mathcal{H}$. Suppose that, for every $x\in S$, $\norm{x_k-x}$ converges and that every weak sequential cluster point of $\{x_k\}$ belongs to $S$. Then $\{x_k\}$ converges weakly to a point in $S$.
\end{proposition}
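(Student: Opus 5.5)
The plan is the classical Opial argument, which uses only the Hilbert structure of $\mathcal{H}$. It has three steps: (i) $\{x_k\}$ is bounded and therefore has at least one weak sequential cluster point; (ii) there is at most one weak sequential cluster point; (iii) a bounded sequence with a unique weak sequential cluster point converges weakly to it. By hypothesis that point lies in $S$.

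For (i), pick any $x\in S$, which is possible since $S\neq\emptyset$. Then $\norm{x_k-x}$ converges, so it is bounded, and hence $\{x_k\}$ is bounded. In a Hilbert space every bounded sequence has a weakly convergent subsequence (weak sequential compactness of closed balls), so the set of weak sequential cluster points is nonempty. By assumption it is contained in $S$.

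For (ii), which is the key step, let $y$ and $y'$ be two weak sequential cluster points, with $x_{k_j}\rightharpoonup y$ and $x_{l_j}\rightharpoonup y'$. Both lie in $S$, so $\norm{x_k-y}$ and $\norm{x_k-y'}$ both converge. Expanding the squares gives
\[
\norm{x_k-y}^2-\norm{x_k-y'}^2=2\inner{x_k,y'-y}+\norm{y}^2-\norm{y'}^2 .
\]
Hence $\inner{x_k,y'-y}$ converges to some limit $\ell$. Passing to the limit along $\{k_j\}$ and along $\{l_j\}$, weak convergence yields $\inner{y,y'-y}=\ell=\inner{y',y'-y}$. Subtracting, $\norm{y'-y}^2=0$, so $y=y'$. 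The only point needing care is that the scalar sequence converges along the full sequence, so its limits along the two subsequences coincide; this is exactly what the convergence of both distance sequences provides.

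For (iii), call the unique cluster point $\bar x\in S$ and suppose $x_k\not\rightharpoonup\bar x$. Then there exist $w\in\mathcal{H}$, $\varepsilon>0$ and a subsequence along which $|\inner{x_{k}-\bar x,w}|\geq\varepsilon$. This subsequence is bounded, so it has a further weakly convergent subsequence. Its limit is a weak sequential cluster point of $\{x_k\}$, hence equals $\bar x$, which contradicts the $\varepsilon$-separation. Therefore $x_k\rightharpoonup\bar x\in S$. I expect no real obstacle here; the only substantive step is the identity in (ii).
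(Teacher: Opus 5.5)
Your argument is correct and is the standard proof of Opial's lemma; the paper does not reprove it but cites Lemma 2.47 of Bauschke--Combettes, whose proof has the same three steps. Those steps are boundedness from a single $x\in S$, uniqueness of the weak sequential cluster point via the identity $\norm{x_k-y}^2-\norm{x_k-y'}^2=2\inner{x_k,y'-y}+\norm{y}^2-\norm{y'}^2$, and the fact that a bounded sequence with a unique weak sequential cluster point converges weakly to it.
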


\begin{proposition}[Proposition 20.38 (ii) in \cite{bauschke17}]
	\label{prop:nonexpansiveweak}
	Let $A$ be a maximally monotone operator. If there exist two sequences $\{x_k\}$ and $\{u_k\}$ such that $u_k\in A(x_k)$, $x_k\rightharpoonup x$, and $u_k\to u$, then $u\in A(x)$.
\end{proposition}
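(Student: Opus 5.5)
The plan is to combine the monotonicity inequality, passed to the limit, with the maximality of $A$. Fix an arbitrary pair $(y,v)\in\mathrm{graph}(A)$. Since $u_k\in A(x_k)$ and $A$ is monotone, we have
\[
\inner{u_k-v,x_k-y}\geq 0,\quad\forall k.
\]
The goal is to let $k\to\infty$ in this inequality and conclude that $\inner{u-v,x-y}\geq 0$.

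To pass to the limit, I split the difference as
\[
\inner{u_k-v,x_k-y}-\inner{u-v,x-y}=\inner{u_k-u,x_k-y}+\inner{u-v,x_k-x}.
\]
The second term tends to $0$ directly from the weak convergence $x_k\rightharpoonup x$, since $u-v$ is a fixed vector. For the first term, I use that a weakly convergent sequence is bounded; this follows from the uniform boundedness principle. So there is $M>0$ with $\norm{x_k-y}\leq M$ for all $k$. The Cauchy--Schwarz inequality then gives $|\inner{u_k-u,x_k-y}|\leq M\norm{u_k-u}\to 0$, by the strong convergence $u_k\to u$. Hence $\inner{u-v,x-y}=\lim_k\inner{u_k-v,x_k-y}\geq 0$.

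Since $(y,v)\in\mathrm{graph}(A)$ was arbitrary, the pair $(x,u)$ is monotonically related to every point of the graph. If $(x,u)\notin\mathrm{graph}(A)$, then the operator whose graph is $\mathrm{graph}(A)\cup\{(x,u)\}$ would be monotone and would properly extend $A$. This contradicts maximality, so $u\in A(x)$.

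The only delicate step is the mixed weak--strong product $\inner{u_k-u,x_k-y}$. A product of two merely weakly convergent sequences need not converge, so this step genuinely uses both the strong convergence of $\{u_k\}$ and the boundedness of the weakly convergent sequence $\{x_k\}$. Everything else is routine.
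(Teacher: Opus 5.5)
Your proof is correct and is essentially the standard argument behind the cited result; the paper gives no proof of its own, only the reference to Proposition 20.38(ii) of Bauschke--Combettes. That argument passes the monotonicity inequality $\inner{u_k-v,x_k-y}\geq 0$ to the limit, using the strong convergence of $\{u_k\}$ and the boundedness of the weakly convergent $\{x_k\}$, and then concludes by maximality. The one difference is that the textbook states the result for bounded nets, where boundedness has to be assumed, whereas for sequences you correctly obtain it from the uniform boundedness principle.
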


\begin{lemma}
	\label{lem:zbounded}
	Let $\{z_k\}$ be the sequence generated by Algorithm \ref{al:sppa}. If $A$ is maximally monotone, $r>1$, and $c\in(0,r-1)$, then for any $x^*\in\zero(A)$, the sequence $\{\norm{z_k-x^*}^2\}$ is uniformly bounded.
\end{lemma}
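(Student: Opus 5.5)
The approach is to use the Lyapunov function $\E(k)$ from \eqref{eq:lyapunovdiscrete} together with its nonnegative decomposition in Lemma \ref{lem:lyapunovdiscretenonnegative}. By Theorem \ref{thm:lyapunovdiscrete}, $\E(k)$ is non-increasing, so $\E(k)\leq\E(0)=\frac{r-1}{2c}\norm{x_0-x^*}^2$ for every $k$. Lemma \ref{lem:lyapunovdiscretenonnegative} writes $\E(k)$ as a sum of three nonnegative terms. The last of these is
\[
\frac12\norm{\frac{\varphi_k}{r}\sqrt{\frac{c}{r-1}}a_k+\sqrt{\frac{r-1}{c}}(x^*-z_k)}^2,
\]
so this squared norm is also bounded by $\E(0)$. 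That gives a uniform bound on the vector $v_k:=\frac{\varphi_k}{r}\sqrt{\frac{c}{r-1}}a_k+\sqrt{\frac{r-1}{c}}(x^*-z_k)$, namely $\norm{v_k}\leq\sqrt{2\E(0)}$.

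Next I control $\varphi_k a_k$. Inequality \eqref{eq:lastiterate} gives
\[
\varphi_k^2\norm{a_k}^2\leq\frac{r^2(r-1)^2\norm{x_0-x^*}^2}{c(r-1-c)}.
\]
This uses only $\varphi_k^2\leq\varphi_k^2+\frac{r(r-1)}{r-1-c}\varphi_k$ and $r-1-c>0$. Hence $\frac{\varphi_k}{r}\sqrt{\frac{c}{r-1}}\norm{a_k}$ is bounded by a constant depending only on $r$, $c$ and $\norm{x_0-x^*}$. The case $k=0$ is trivial, since $\varphi_0=0$ and $z_0=x_0$.

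I then combine the two bounds with the triangle inequality:
\[
\sqrt{\frac{r-1}{c}}\norm{z_k-x^*}\leq\norm{v_k}+\frac{\varphi_k}{r}\sqrt{\frac{c}{r-1}}\norm{a_k}.
\]
Squaring, or using $(u+w)^2\leq 2u^2+2w^2$, gives $\norm{z_k-x^*}^2\leq C\norm{x_0-x^*}^2$ with $C$ depending only on $r$ and $c$, uniformly in $k$. This is exactly the claim.

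There is no real obstacle. The only care needed is to use the nonnegativity of the middle term $\frac{\varphi_k}{r}\inner{a_k,x_k-x^*}$, which follows from monotonicity and $0\in A(x^*)$, so that dropping the first two terms still leaves $\norm{v_k}^2\leq 2\E(0)$. One should also check that the bound holds for every $k$ up to termination if the loop breaks.
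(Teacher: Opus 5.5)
Your argument is correct, but it reaches the bound through a different decomposition than the paper's. You use the splitting of Lemma~\ref{lem:lyapunovdiscretenonnegative}, in which $z_k-x^*$ appears only inside the square $\frac12\norm{v_k}^2$, coupled with $\varphi_k a_k$. You therefore need a second input, the bound $\varphi_k^2\norm{a_k}^2\leq\frac{r^2(r-1)^2}{c(r-1-c)}\norm{x_0-x^*}^2$ from \eqref{eq:lastiterate} (which can also be read off the first term of the same splitting), followed by the triangle inequality.

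The paper instead completes the square with only part of the $z$-coefficient, splitting $\frac{r-1}{2c}=\frac{r-1-c}{2c}+\frac12$, to obtain
\[
\E(k)=\frac{\varphi_k}{2r}\norm{a_k}^2+\frac{\varphi_k}{r}\inner{a_k,x_k-x^*}+\frac{r-1-c}{2c}\norm{z_k-x^*}^2+\frac{1}{2}\norm{\frac{\varphi_k}{r}a_k+x^*-z_k}^2 .
\]
Here $\norm{z_k-x^*}^2$ appears on its own with a positive coefficient, so $\norm{z_k-x^*}^2\leq\frac{r-1}{r-1-c}\norm{x_0-x^*}^2$ follows in one line from $\E(k)\leq\E(0)$.

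Your route gives the constant $\bigl(1+\sqrt{c/(r-1-c)}\bigr)^2$. This exceeds the paper's $\frac{r-1}{r-1-c}=1+\frac{c}{r-1-c}$ by at most a factor of $2$, which does not matter since the lemma only claims uniform boundedness. In exchange, your version needs no new algebraic identity beyond what Lemma~\ref{lem:lyapunovdiscretenonnegative} and Theorem~\ref{thm:lyapunovdiscrete} already provide.
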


\begin{proof}
	Similar to Lemma \ref{lem:lyapunovdiscretenonnegative}, the Lyapunov function $\E(k)$ defined in \eqref{eq:lyapunovdiscrete} can be rewritten as
	\[
	\E(k)=\frac{\varphi_k}{2r}\norm{a_k}^2+\frac{\varphi_k}{r}\inner{a_k,x_k-x^*}+\frac{r-1-c}{2c}\norm{z_k-x^*}^2+\frac{1}{2}\norm{\frac{\varphi_k}{r}a_k+x^*-z_k}^2.
	\]
	Since $\{\E(k)\}$ is non-increasing (Theorem \ref{thm:lyapunovdiscrete}), we have
	\[
	\frac{r-1-c}{2c}\norm{z_k-x^*}^2\leq\E(k)\leq\E(0)=\frac{r-1}{2c}\norm{x_0-x^*}^2.
	\]
	Hence the sequence $\{\norm{z_k-x^*}^2\}$ is bounded by $\frac{r-1}{r-1-c}\norm{x_0-x^*}^2$.
\end{proof}

\begin{theorem}
	\label{thm:pointconvergence}
	Let $\{x_k\}$ and $\{a_k\}$ be the sequences generated by Algorithm \ref{al:sppa}. If $A$ is maximally monotone, $r>1$, $c\in(0,r-1)$, $\alpha\in(0,1)$, and \eqref{eq:sumbeta} holds, then $\{x_k\}$ converges weakly to a point in $\zero(A)$ and
	\[
	\lim_{k\to\infty}k^2\norm{a_k}^2=0.
	\]
\end{theorem}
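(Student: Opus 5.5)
Under the hypotheses of Theorem~\ref{thm:pointconvergence} we already have \eqref{eq:sumalpha}, the identity \eqref{eq:differenceestimation}, and the bounds of Theorem~\ref{thm:lyapunovdiscrete} and Lemma~\ref{lem:zbounded}. The plan is to reproduce, in discrete form, the standard $o(1/k^2)$ argument used for SPPA and Fast-KM. I will show that the two quantities $\varphi_k^2\norm{a_k}^2$ and $\norm{z_k-x^*}^2$ have limits. Then I will combine existence of the limit of $\varphi_k^2\norm{a_k}^2$ with summability to force that limit to be zero. Weak convergence will follow from Opial's lemma.

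\textbf{Step 1 (summability).} Summing \eqref{eq:differenceestimation} gives
\[
\sum_k(\varphi_{k+1}-\varphi_k)\inner{a_{k+1},x_{k+1}-x^*}<\infty,
\]
and \eqref{eq:sumalpha} gives $\sum_k(\varphi_{k+1}-\varphi_k)\varphi_k\norm{a_{k+1}}^2<\infty$. Since $\varphi_k\ge k$, this implies $\sum_k \varphi_k\norm{a_{k+1}}^2<\infty$. We have $(\varphi_{k+1}-\varphi_k)^2\le 2+2(\varphi_{k+1}-\varphi_k-1)^2$. Combined with \eqref{eq:sumbeta}, this yields $\sum_k(\varphi_{k+1}-\varphi_k)^2\varphi_k\norm{a_{k+1}}^2<\infty$. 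This is the quantity that controls the increment $\norm{z_{k+1}-z_k}^2=\frac{c^2(\varphi_{k+1}-\varphi_k)^2}{r^2}\norm{a_{k+1}}^2$, weighted appropriately.

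\textbf{Step 2 (existence of limits).} Since $\E(k)$ is nonincreasing and bounded below, $\lim\E(k)$ exists. I will also consider a second Lyapunov-type quantity, a one-parameter family
\[
\E_\lambda(k)=\frac{(\varphi_k+r)\varphi_k}{2r^2}\norm{a_k}^2+\frac{\varphi_k}{r}\inner{a_k,x_k-z_k}+\frac{\lambda}{2c}\norm{z_k-x^*}^2.
\]
The update of $z$ uses only $z_k$ and $a_{k+1}$, so its difference is computed exactly as in Step~2 of the proof of Theorem~\ref{thm:lyapunovdiscrete}, with $r-1$ replaced by $\lambda$ in the term II. The extra terms are a multiple of $(\varphi_{k+1}-\varphi_k)\inner{a_{k+1},z_k-x^*}$ and of $(\varphi_{k+1}-\varphi_k)^2\norm{a_{k+1}}^2$. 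The first can be bounded by Cauchy--Schwarz, using boundedness of $z_k$ and $\sqrt{\varphi_k}$-weighting, so that it becomes summable by Step~1. Proposition~\ref{prop:sumprop} then gives convergence of $\E_\lambda(k)$ for two values of $\lambda$. Subtracting, $\lim\norm{z_k-x^*}^2$ exists for every $x^*\in\zero(A)$, and hence also
\[
\lim_k\Big(\frac{(\varphi_k+r)\varphi_k}{2r^2}\norm{a_k}^2+\frac{\varphi_k}{r}\inner{a_k,x_k-z_k}\Big)
\]
exists. To separate the $\norm{a_k}^2$ part, I will use $x_k-z_k=\frac{\varphi_k+r}{r}\bigl(\ldots\bigr)$-type identities from the proof of Theorem~\ref{thm:lyapunovdiscrete} together with $\sum\varphi_k\norm{a_{k+1}}^2<\infty$, which forces $\liminf\varphi_k^2\norm{a_k}^2\cdot(1/\varphi_k)$-type terms to vanish. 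The aim is that $\lim\varphi_k^2\norm{a_k}^2$ exists.

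\textbf{Step 3 (conclusion).} Once $L=\lim\varphi_k^2\norm{a_k}^2$ exists, $\sum\varphi_k\norm{a_{k+1}}^2<\infty$ gives $\sum L/\varphi_k<\infty$ if $L>0$. Since $\varphi_k\le$ the growth forced by \eqref{eq:sumbeta}, the sum $\sum 1/\varphi_k$ diverges, so $L=0$. Alternatively, Corollary~\ref{coro:1} gives $\liminf=0$ directly, so $L=0$. Then $k^2\norm{a_k}^2\le\varphi_k^2\norm{a_k}^2\to0$.

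For weak convergence, $x_k-z_k$ is controlled by $\varphi_k a_k\to0$ plus differences of iterates, so $\norm{x_k-x^*}$ has a limit. Weak cluster points lie in $\zero(A)$ by Proposition~\ref{prop:nonexpansiveweak}, since $a_k\in A(x_k)$ and $a_k\to0$. Opial's lemma (Proposition~\ref{prop:discreteopial}) then finishes the argument.

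\textbf{Main obstacle.} The hardest step is Step~2, namely showing that $\lim\varphi_k^2\norm{a_k}^2$ exists: disentangling the cross term $\varphi_k\inner{a_k,x_k-z_k}$. This is also where \eqref{eq:sumbeta} must be used essentially, to make $\varphi_k a_k$ and $z_k$ move summably.
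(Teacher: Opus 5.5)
\textbf{There is a genuine gap, and it sits in Step~2.}

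Since $\E_\lambda(k)-\E(k)=\frac{\lambda-(r-1)}{2c}\norm{z_k-x^*}^2$, convergence of $\E_\lambda$ for a second value of $\lambda$ is equivalent to convergence of $\norm{z_k-x^*}^2$. Everything therefore hinges on controlling $\sum_k(\varphi_{k+1}-\varphi_k)\inner{a_{k+1},z_k-x^*}$.

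With $M:=\sup_k\norm{z_k-x^*}$ (Lemma~\ref{lem:zbounded}), your $\sqrt{\varphi_k}$-weighted Cauchy--Schwarz only gives
\[
\sum_k(\varphi_{k+1}-\varphi_k)\norm{a_{k+1}}\norm{z_k-x^*}\le M\Bigl(\sum_k(\varphi_{k+1}-\varphi_k)^2\varphi_k\norm{a_{k+1}}^2\Bigr)^{1/2}\Bigl(\sum_k\frac{1}{\varphi_k}\Bigr)^{1/2}.
\]
The right-hand side is $+\infty$ whenever $\sum_k1/\varphi_k=+\infty$, for example when $\varphi_k=k$ (plain SPPA, where \eqref{eq:sumbeta} holds trivially).

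Splitting off the nonpositive monotone part $-\inner{a_{k+1},x_{k+1}-x^*}$ leaves $(\varphi_{k+1}-\varphi_k)\inner{a_{k+1},x_{k+1}-z_{k+1}}$. Young's inequality makes this summable only if you also know $\sum_k\norm{x_{k+1}-z_{k+1}}^2/\varphi_k<+\infty$.

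That estimate, together with existence of $\lim_k\norm{x_k-z_k}$, is the missing idea. The paper gets it from a second energy,
\[
\G(k)=\frac12\norm{x_k-z_k}^2+\frac{(r-c)(\varphi_k+r)^2}{2c\varphi_k^2}\norm{z_k-x^*}^2,
\]
and this is where \eqref{eq:sumbeta} is really used. Expanding $x_k-z_k$ through the algorithm produces the factor $1-\frac{c(\varphi_{k+1}-\varphi_k)}{r}=-(\varphi_{k+1}-\varphi_k-1)+\frac{(r-c)(\varphi_{k+1}-\varphi_k)}{r}$. The first piece is absorbed by Young's inequality plus \eqref{eq:sumbeta}. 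The second combines with the $z$-part into the nonpositive term $-\inner{a_{k+1},x_{k+1}-x^*}$.

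\textbf{Step~3 has a second, independent problem.} Nothing in \eqref{eq:sumbeta} forces $\sum_k1/\varphi_k=+\infty$. Corollary~\ref{coro:1} concerns $k^2\norm{a_k}^2\le\varphi_k^2\norm{a_k}^2$, which is the wrong direction for concluding $\lim\varphi_k^2\norm{a_k}^2=0$. So your Step~2 bound works only when $\sum_k1/\varphi_k<+\infty$, and your Step~3 only when $\sum_k1/\varphi_k=+\infty$. You need the paper's case split.

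\emph{Case $\sum_k1/\varphi_k<+\infty$.}
\begin{itemize}
\item Monotonicity of $\varphi_k$ and a condensation argument give $k/\varphi_k\to0$, so \eqref{eq:lastiterate} yields $k^2\norm{a_k}^2\to0$ directly.
\item Weak convergence comes from the three-point identity for $\norm{x_k-x^*}^2$. Its error terms are summable by \eqref{eq:xzsum} and $\sum_k1/\varphi_k<+\infty$.
\end{itemize}

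\emph{Case $\sum_k1/\varphi_k=+\infty$.}
\begin{itemize}
\item \eqref{eq:xzsum} and \eqref{eq:xzlimit} force $\norm{x_k-z_k}\to0$.
\item Since $\varphi_k\norm{a_k}$ is bounded, the cross term $\frac{\varphi_k}{r}\inner{a_k,x_k-z_k}$ in $\E(k)$ tends to $0$. This is exactly the disentangling you flagged as the obstacle.
\item Hence $\lim_k\varphi_k^2\norm{a_k}^2$ exists, and \eqref{eq:sumalpha} together with $\sum_k1/\varphi_k=+\infty$ forces it to vanish.
\item $\norm{x_k-z_k}\to0$ is also what transfers Opial's lemma from $\{z_k\}$ to $\{x_k\}$.
\end{itemize}

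Your claim that $x_k-z_k$ is controlled by $\varphi_ka_k$ does not hold as stated. Indeed, $x_k-z_k=-\frac{\varphi_k+r}{r}(a_{k+1}+x_{k+1}-x_k)$ contains $\varphi_k(x_{k+1}-x_k)$, for which you have no estimate.
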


\begin{proof}
	Step 1: Consider the auxiliary sequence
	\[
	\G(k)=\frac{1}{2}\norm{x_k-z_k}^2+\frac{(r-c)(\varphi_k+r)^2}{2c\varphi_k^2}\norm{z_k-x^*}^2,\quad\forall k\geq 1,
	\]
	and prove that
	\begin{equation}
		\sum_{k=1}^{\infty}\frac{\norm{x_{k+1}-z_{k+1}}^2}{\varphi_k}<+\infty,
		\label{eq:xzsum}
	\end{equation}
	and that \begin{equation}
		\lim_{k\to\infty}\norm{x_k-z_k}^2 \text{ exists.}
		\label{eq:xzlimit}
	\end{equation}
	
	First, using
	\begin{align}
		x_k &= \frac{\varphi_k+r}{\varphi_k}x_{k+1}-\frac{r}{\varphi_k}z_k+\frac{\varphi_k+r}{\varphi_k}a_{k+1},\quad\forall k\geq 1, \label{eq:xdiff}\\
		z_{k+1} &= z_k-\frac{(\varphi_{k+1}-\varphi_k)c}{r}a_{k+1},\label{eq:zdiff}
	\end{align}
	we obtain
	\begin{align*}
		&\ \frac{1}{2}\norm{x_{k+1}-z_{k+1}}^2-\frac{1}{2}\norm{x_k-z_k}^2 \\
		=&\ \frac{1}{2}\norm{x_{k+1}-z_{k+1}}^2-\frac{1}{2}\norm{\frac{\varphi_k+r}{\varphi_k}(x_{k+1}-z_{k+1})+\left(\frac{\varphi_k+r}{\varphi_k}-\frac{(\varphi_{k+1}-\varphi_k)(\varphi_k+r)c}{r\varphi_k}\right)a_{k+1}}^2\\
		=&\ -\frac{r(2\varphi_k+r)}{2\varphi_k}\norm{x_{k+1}-z_{k+1}}^2-\frac{1}{2}\left(\frac{\varphi_k+r}{\varphi_k}-\frac{(\varphi_{k+1}-\varphi_k)(\varphi_k+r)c}{r\varphi_k}\right)^2\norm{a_{k+1}}^2\\
		&\ -\frac{\varphi_k+r}{\varphi_k}\left(\frac{\varphi_k+r}{\varphi_k}-\frac{(\varphi_{k+1}-\varphi_k)(\varphi_k+r)c}{r\varphi_k}\right)\inner{a_{k+1},x_{k+1}-z_{k+1}}.
	\end{align*}
	By Young's inequality,
	\begin{align*}
		&\ \frac{(\varphi_{k+1}-\varphi_k-1)(\varphi_k+r)^2}{\varphi_k^2}\inner{a_{k+1},x_{k+1}-z_{k+1}}\\
		\leq&\ \frac{(\varphi_{k+1}-\varphi_k-1)^2(\varphi_k+r)^4}{\varphi_k^2(2\varphi_k+r)}\norm{a_{k+1}}^2+\frac{r(2\varphi_k+r)}{4\varphi_k^2}\norm{x_{k+1}-z_{k+1}}^2.
	\end{align*}
	Therefore,
	\begin{align*}
		&\ \frac{1}{2}\norm{x_{k+1}-z_{k+1}}^2-\frac{1}{2}\norm{x_k-z_k}^2 \\
		\leq &\ -\frac{r(2\varphi_k+r)}{4\varphi_k^2}\norm{x_{k+1}-z_{k+1}}^2-\frac{(r-c)(\varphi_{k+1}-\varphi_k)(\varphi_k+r)^2}{r\varphi_k^2}\inner{a_{k+1},x_{k+1}-z_{k+1}}\\
		&\ +\left[\frac{(\varphi_{k+1}-\varphi_k-1)^2(\varphi_k+r)^4}{\varphi_k^2(2\varphi_k+r)}-\frac{(\varphi_k+r)^2}{2\varphi_k^2}\left(1-\frac{(\varphi_{k+1}-\varphi_k)c}{r}\right)^2\right]\norm{a_{k+1}}^2.
	\end{align*}
	
	Next, since $f(y):=\frac{(y+r)^2}{y^2}$ is non-increasing on $(0,+\infty)$, we have
	\begin{align*}
		&\ \frac{(r-c)(\varphi_{k+1}+r)^2}{2c\varphi_{k+1}^2}\norm{z_{k+1}-x^*}^2-\frac{(r-c)(\varphi_k+r)^2}{2c\varphi_k^2}\norm{z_k-x^*}^2\\
		\leq&\ \frac{(r-c)(\varphi_k+r)^2}{2c\varphi_k^2}\norm{z_{k+1}-x^*}^2-\frac{(r-c)(\varphi_k+r)^2}{2c\varphi_k^2}\norm{z_k-x^*}^2\\
		\leq&\ -\frac{(r-c)(\varphi_{k+1}-\varphi_k)(\varphi_k+r)^2}{r\varphi_k^2}\inner{a_{k+1},z_{k+1}-x^*}\\
		&\ -\frac{(r-c)c(\varphi_{k+1}-\varphi_k)^2(\varphi_k+r)^2}{2r^2\varphi_k^2}\norm{a_{k+1}}^2.
	\end{align*}
	Combining the above, we obtain
	\begin{align*}
		&\ \G(k+1)-\G(k)\\
		\leq&\ -\frac{r(2\varphi_k+r)}{4\varphi_k^2}\norm{x_{k+1}-z_{k+1}}^2-\frac{(r-c)(\varphi_{k+1}-\varphi_k)(\varphi_k+r)^2}{r\varphi_k^2}\inner{a_{k+1},x_{k+1}-x^*}\\
		&\ +\left[\frac{(\varphi_{k+1}-\varphi_k-1)^2(\varphi_k+r)^4}{\varphi_k^2(2\varphi_k+r)}-\frac{(\varphi_k+r)^2}{2\varphi_k^2}\left(1-\frac{(\varphi_{k+1}-\varphi_k)c}{r}\right)^2\right]\norm{a_{k+1}}^2\\
		&\ -\frac{(r-c)c(\varphi_{k+1}-\varphi_k)^2(\varphi_k+r)^2}{2r^2\varphi_k^2}\norm{a_{k+1}}^2.
	\end{align*}
	By \eqref{eq:sumbeta},
	\[
	\sum_{k=1}^\infty\frac{(\varphi_{k+1}-\varphi_k-1)^2(\varphi_k+r)^4}{\varphi_k^2(2\varphi_k+r)}\norm{a_{k+1}}^2<+\infty.
	\]
	Applying Proposition \ref{prop:sumprop}, we obtain
	\[
	\sum_{k=1}^{\infty}\frac{\norm{x_{k+1}-z_{k+1}}^2}{\varphi_k}<+\infty,\qquad\lim_{k\to\infty}\G(k)\text{ exists.}
	\]
	
	Step 2: We now deduce the $o(1/k^2)$ convergence rate and the weak convergence. We consider two cases.
	
	Case 1: $\sum_{k=1}^\infty\frac{1}{\varphi_k}<+\infty$. We first prove that $\lim_{k\to\infty}\frac{k}{\varphi_k}=0$. Since $\varphi_{k+1}\geq\varphi_k+1$, by the Cauchy condensation test we have
	\[
	\lim_{n\to\infty}\frac{2^n}{\varphi_{2^n}}=0.
	\]
	For every positive integer $k$, there exists $n$ such that $2^n\leq k<2^{n+1}$. Hence
	\[
	0\leq\frac{k}{\varphi_k}\leq\frac{k}{\varphi_{2^n}}\leq\frac{2^{n+1}}{\varphi_{2^n}}=2\cdot\frac{2^n}{\varphi_{2^n}}\to 0.
	\]
	Therefore $\lim_{k\to\infty}\frac{k}{\varphi_k}=0$. Since $\norm{a_k}^2\leq O(1/\varphi_k^2)$, we obtain $\lim_{k\to\infty}k^2\norm{a_k}^2=0$.
	
	Next, we discuss the weak convergence. For any $x^*\in\zero(A)$, the standard three-point identity gives
	\begin{equation}
		\frac{1}{2}\norm{x_{k+1}-x^*}^2
		=\frac{1}{2}\norm{x_k-x^*}^2
		+\inner{x_{k+1}-x_k,x_{k+1}-x^*}
		-\frac{1}{2}\norm{x_{k+1}-x_k}^2.
		\label{eq:threepoint}
	\end{equation}
	Substituting \eqref{eq:xdiff} and \eqref{eq:zdiff} into the inner product term in \eqref{eq:threepoint}, we obtain
	\begin{align} \inner{x_{k+1}-x_k,x_{k+1}-x^*}=\frac{r}{\varphi_k+r}\inner{z_k-x_k,x_{k+1}-x^*}
		-\inner{a_{k+1},x_{k+1}-x^*}.
		\label{eq:inner1}
	\end{align}
	Similarly, substituting \eqref{eq:xdiff} into the squared norm term in \eqref{eq:threepoint} gives
	\begin{align}
		&\ -\frac{1}{2}\norm{x_{k+1}-x_k}^2\notag\\
		=&\ -\frac{r^2}{2\varphi_k^2}\norm{z_k-x_{k+1}}^2
		+\frac{r(\varphi_k+r)}{\varphi_k^2}\inner{z_k-x_{k+1},a_{k+1}}-\frac{(\varphi_k+r)^2}{2\varphi_k^2}\norm{a_{k+1}}^2.
		\label{eq:norm1}
	\end{align}
	
	Combining \eqref{eq:threepoint}, \eqref{eq:inner1}, and \eqref{eq:norm1}, we have
	\begin{align}
		&\ \frac{1}{2}\norm{x_{k+1}-x^*}^2\notag\\
		=&\ \frac{1}{2}\norm{x_k-x^*}^2
		+\frac{r}{\varphi_k+r}\inner{z_k-x_k,x_{k+1}-x^*}-\inner{a_{k+1},x_{k+1}-x^*}\notag\\ 
		&\ -\frac{r^2}{2\varphi_k^2}\norm{z_k-x_{k+1}}^2+\frac{r(\varphi_k+r)}{\varphi_k^2}\inner{z_k-x_{k+1},a_{k+1}}-\frac{(\varphi_k+r)^2}{2\varphi_k^2}\norm{a_{k+1}}^2\notag\\
		=&\ \frac{1}{2}\norm{x_k-x^*}^2
		+\frac{r}{\varphi_k+r}\inner{z_k-x_k,x_{k+1}-x^*}-\left(1+\frac{r(\varphi_k+r)}{\varphi_k^2}\right)\inner{a_{k+1},x_{k+1}-x^*}\notag\\ 
		&\ -\frac{r^2}{2\varphi_k^2}\norm{z_k-x_{k+1}}^2+\frac{r(\varphi_k+r)}{\varphi_k^2}\inner{z_k-x^*,a_{k+1}}-\frac{(\varphi_k+r)^2}{2\varphi_k^2}\norm{a_{k+1}}^2.
		\label{eq:expand}
	\end{align}
	Since $A$ is maximally monotone, $a_{k+1}\in A(x_{k+1})$, and $0\in A(x^*)$, we have
	\[
	\inner{a_{k+1},x_{k+1}-x^*}\geq 0.
	\]
	Consequently,
	\begin{equation*}
		-\left(1+\frac{r(\varphi_k+r)}{\varphi_k^2}\right)\inner{a_{k+1},x_{k+1}-x^*}\leq 0.
	\end{equation*}
	Also, we have that \begin{align*}
		 &\ \frac{r}{\varphi_k+r}\inner{z_k-x_k,x_{k+1}-x^*} \\
		\leq&\ \frac{r}{\varphi_k+r}|\inner{z_k-x_k,x_{k+1}-x^*}|\\
		= &\ \frac{r}{\varphi_k+r}|\inner{z_k-x_k,x_{k+1}-z_{k+1}}+\frac{r}{\varphi_k+r}\inner{z_k-x_k,z_{k+1}-x^*}|\\
		\leq&\ \frac{r}{\varphi_k+r}|\inner{z_k-x_k,x_{k+1}-z_{k+1}}|+\frac{r}{\varphi_k+r}|\inner{z_k-x_k,z_{k+1}-x^*}|\\
		\leq&\ \frac{r}{2(\varphi_k+r)}\left(\norm{x_k-z_k}^2+\norm{x_{k+1}-z_{k+1}}^2\right)+\frac{r}{\varphi_k+r}|\inner{z_k-x_k,z_{k+1}-x^*}|.
	\end{align*}
	Since $\{\norm{z_k-x^*}^2\}$ is bounded, by H\"older's and Cauchy's inequalities, we have
	\begin{align*}
		&\ \sum_{k=1}^\infty\frac{r}{\varphi_k+r}|\inner{z_k-x_k,z_{k+1}-x^*}|\\
		\leq&\ r\sup_{k\geq 1}\norm{z_k-x^*}\left(\sum_{k=1}^\infty\frac{1}{\varphi_k+r}\right)^{1/2}\left(\sum_{k=1}^\infty\frac{\norm{z_k-x_k}^2}{\varphi_k+r}\right)^{1/2}<+\infty
	\end{align*}
	Similarly, by H\"older's and Cauchy's inequalities, we have\[
	\sum_{k=1}^{+\infty}\frac{r(\varphi_k+r)}{\varphi_k^2}\inner{z_k-x^*,a_{k+1}}\leq\sum_{k=1}^{+\infty}\frac{r(\varphi_k+r)}{\varphi_k^2}|\inner{z_k-x^*,a_{k+1}}|<+\infty
	\]
	By Proposition \ref{prop:sumprop}, $\lim_{k\to\infty}\frac{1}{2}\norm{x_k-x^*}^2$ exists for all $x^*\in\zero(A)$.
	
	For any weak cluster point $x_\infty$ of $\{x_k\}$, let $\{x_{k_j}\}$ be a subsequence such that $x_{k_j}\rightharpoonup x_\infty$. Since $a_k\to 0$, Proposition \ref{prop:nonexpansiveweak} implies $x_\infty\in\zero(A)$. By Opial's lemma, $x_k\rightharpoonup x_\infty\in\zero(A)$.
	
	Case 2: $\sum_{k=1}^\infty\frac{1}{\varphi_k}=+\infty$ and $\alpha>0$. We first prove that $\lim_{k\to\infty}\norm{z_k-x^*}^2$ exists for all $x^*\in\zero(A)$. Using $z_{k+1}=z_k-\frac{c(\varphi_{k+1}-\varphi_k)}{r}a_{k+1}$, we have
	\begin{align*}
		&\ \frac{1}{2}\norm{z_{k+1}-x^*}^2\\
		=&\ \frac{1}{2}\norm{z_k-x^*}^2-\frac{c(\varphi_{k+1}-\varphi_k)}{r}\inner{a_{k+1},x_{k+1}-x^*}\\
		&\ +\frac{c(\varphi_{k+1}-\varphi_k)}{r}\inner{a_{k+1},x_{k+1}-z_{k+1}}-\frac{1}{2}\norm{z_{k+1}-z_k}^2.
	\end{align*}
	Since $\varphi_{k+1}-\varphi_k\geq 1$ and $x^*\in\zero(A)$, we have
	\[
	-\frac{c(\varphi_{k+1}-\varphi_k)}{r}\inner{a_{k+1},x_{k+1}-x^*}\leq 0.
	\]
	By Young's inequality,
	\begin{align*}
		&\ \frac{c(\varphi_{k+1}-\varphi_k)}{r}\inner{a_{k+1},x_{k+1}-z_{k+1}}\\
		\leq&\ \frac{c^2(\varphi_{k+1}-\varphi_k)^2\varphi_k}{r^2}\norm{a_{k+1}}^2+\frac{\norm{x_{k+1}-z_{k+1}}^2}{\varphi_k}\\
		=&\ \frac{c^2\varphi_k[(\varphi_{k+1}-\varphi_k-1)^2+2(\varphi_{k+1}-\varphi_k-1)\varphi_k+\varphi_k]}{r^2}\norm{a_{k+1}}^2+\frac{\norm{x_{k+1}-z_{k+1}}^2}{\varphi_k}.
	\end{align*}
	Using \eqref{eq:sumalpha}, \eqref{eq:sumbeta} and \eqref{eq:xzsum}, we obtain
	\[
	\sum_{k=1}^\infty\frac{c(\varphi_{k+1}-\varphi_k)}{r}\inner{a_{k+1},x_{k+1}-z_{k+1}}\leq\sum_{k=1}^\infty\frac{c(\varphi_{k+1}-\varphi_k)}{r}|\inner{a_{k+1},x_{k+1}-z_{k+1}}|<+\infty.
	\]
	By Proposition \ref{prop:sumprop}, $\lim_{k\to\infty}\norm{z_k-x^*}^2$ exists.
	
	For any weak cluster point $z_\infty$ of $\{z_k\}$, let $\{z_{k_j}\}$ be a subsequence such that $z_{k_j}\rightharpoonup z_\infty$. Since $\lim_{k\to\infty}\norm{x_k-z_k}^2=0$, we have $x_{k_j}\rightharpoonup z_\infty$. Since $a_k\to 0$, Proposition \ref{prop:nonexpansiveweak} implies $z_\infty\in\zero(A)$. By Opial's lemma, $z_k\rightharpoonup z_\infty\in\zero(A)$ and $x_k\rightharpoonup z_\infty\in\zero(A)$.
	
	Next, we show that $\lim_{k\to\infty}k^2\norm{a_k}^2=0$. Since $\{\E(k)\}$ is non-increasing and nonnegative, $\lim_{k\to\infty}\E(k)$ exists. Due to \eqref{eq:xzsum}, \eqref{eq:xzlimit} and the assumption that is $\sum_{k=1}^{+\infty}1/\varphi_k<+\infty$, we can prove $\lim_{k\to\infty}\norm{x_k-z_k}^2=0$. Since $\lim_{k\to\infty}\norm{x_k-z_k}^2=0$ and $\lim_{k\to\infty}\norm{z_k-x^*}^2$ exists, we have that $\lim_{k\to\infty}\varphi_k^2\norm{a_k}^2$ exists. Moreover, since
	\[
	\sum_{k=0}^\infty(\varphi_{k+1}-\varphi_k)\varphi_k\norm{a_k}^2<+\infty,
	\]
	the limit must be zero. Therefore,
	\[
	0\leq\lim_{k\to\infty}k^2\norm{a_k}^2\leq\lim_{k\to\infty}\varphi_k^2\norm{a_k}^2=0.
	\]
\end{proof}

\begin{remark}
	If $\varphi_k=k$, then $\varphi_{k+1}-\varphi_k-1=0$, so \eqref{eq:sumbeta} is automatically satisfied. Thus the weak convergence of the original SPPA follows.
\end{remark}

\bibliographystyle{amsplain}
\bibliography{reference}

\end{document}